\theoremstyle{plain}
\newtheorem{proposition}{Proposition}
\newtheorem{definition}{D\'efinition}
\newtheorem{exercice}{Exercice}
\newtheorem{remarque}{Remarque}
\newtheorem{parenthese}[remarque]{Parenth\`ese}
\theoremstyle{nonumberplain}
\newtheorem{demo}{D\'emonstration}
\newcommand{\cadreavant}{\vspace{0.25cm} \begin{adjustwidth}{-0.01\textwidth}{-0.01\textwidth}\nointerlineskip\leavevmode\begin{breakbox}\vspace{-0.25cm}}
\newcommand{\cadreapres}{\vspace{-0.25cm}\end{breakbox}\end{adjustwidth} \vspace{0.25cm}}
\newcommand{\openbox}{\leavevmode
    \hbox to.77778em{%
    \hfil\vrule
    \vbox to.675em{\hrule width.6em\vfil\hrule}%
    \vrule\hfil}}
\renewcommand{\qedsymbol}{\openbox}
\newcommand{\qed}{\hfill \qedsymbol}
\newenvironment{rqpar}[1][]{\begin{quotation}\cadreavant\begin{parenthese}[#1]\small }{\end{parenthese}\cadreapres\end{quotation}}
\newenvironment{proof}[1][]{\begin{quotation}\cadreavant\ifthenelse{\equal{#1}{}}{\begin{demo}}{\begin{demo}[#1]} \small}{\hfill \qed \end{demo}\cadreapres\end{quotation}}
\DeclareMathOperator{\argmintmp}{argmin}
\DeclareMathOperator{\argmaxtmp}{argmax}
\newcommand{\argmin}{\mathop{\argmintmp}}
\newcommand{\argmax}{\mathop{\argmaxtmp}}
\newcommand{\bX}{\mathbf{X}}
\newcommand{\cF}{\mathcal{F}}
\newcommand{\cR}{\mathcal{R}}
\newcommand{\cX}{\mathcal{X}}
\newcommand{\cY}{\mathcal{Y}}
\newcommand{\fh}{\widehat{f}}
\newcommand{\fst}{f^{\star}}
\newcommand{\cRh}{\widehat{\mathcal{R}}}
\newcommand{\E}{\mathbb{E}}%% Esperance
\newcommand{\R}{\mathbb{R}} %corps des reels
\newcommand{\N}{\mathbb{N}} %entiers naturels
\renewcommand{\leq}{\leqslant}
\renewcommand{\geq}{\geqslant}
\DeclareMathOperator{\signe}{signe}%signe
\DeclareMathOperator{\crit}{crit}%critere empirique
\DeclareMathOperator{\penal}{pen}%penalite
\DeclareMathOperator{\card}{Card} % cardinal
\DeclareMathOperator{\var}{var} %variance
\newcommand{\cA}{\mathcal{A}}
\newcommand{\cC}{\mathcal{C}}
\newcommand{\cE}{\mathcal{E}}
\newcommand{\cM}{\mathcal{M}}
\newcommand{\cN}{\mathcal{N}}
\newcommand{\cW}{\mathcal{W}}
\newcommand{\mh}{\widehat{m}}
\newcommand{\parenj}[1]{\mathopen{}\left( #1  \right) \mathclose{}} 
\newcommand{\crochj}[1]{\mathopen{}\left[ #1 \right] \mathclose{}} 
\newcommand{\tr}{\mathrm{trace}}
\newcommand{\SAindex}[1]{\index{#1}}
\begin{document}

\title{Validation crois\'ee}
\author{Sylvain Arlot}
\date{8 Mars 2017}
\maketitle

\tableofcontents

\SAindex{hold-out|see{validation simple}}%
\SAindex{validation croisee@validation crois\'ee!blocs@(par) blocs|see{$V$-fold}}% 
\SAindex{validation croisee@validation crois\'ee!repetee@r\'ep\'et\'ee|see{Monte-Carlo}}% 
\SAindex{estimation de densit\'e!par moindres carr\'es|see{moindres carr\'es}}% 
\SAindex{consistance!en selection@en s\'election|see{s\'election de mod\`eles}}% 
\SAindex{histogramme|see{partition}}% 
\SAindex{Nadaraya-Watson (estimateur de)|see{noyau (r\`egle d'apprentissage)}}% 
\SAindex{partition|seealso{arbre de d\'ecision}}%
\SAindex{penalisation@p\'enalisation|see{s\'election de mod\`eles}}%
\SAindex{plus proches voisins|see{$k$ plus proches voisins}}% 
\SAindex{regression@r\'egression!partition@(par) partition|see{partition}}% 
\SAindex{regression@r\'egression!moindres carr\'es@(par) moindres carr\'es|see{moindres carr\'es}}% 
\SAindex{regressogramme@r\'egressogramme|see{partition, r\`egle de r\'egression}}% 
\SAindex{validation croisee@validation crois\'ee|(}%

\newpage

Ce texte aborde deux probl\`emes fondamentaux qui se posent en apprentissage supervis\'e. 

D'une part, une fois que l'on a entra\^in\'e une r\`egle d'apprentissage $\fh$ 
sur un jeu de donn\'ees $D_n$, 
on dispose d'un pr\'edicteur $\fh(D_n)$ mais d'aucune information sur sa qualit\'e. 
Il est n\'ecessairement imparfait, mais \`a quel point ? 
Avant de s'appuyer sur les pr\'evisions qu'il fournit, 
il est indispensable d'\'evaluer son risque.  
\SAindex{risque!estimation@estimation d'un risque}% 

\SAindex{selection estimateurs@s\'election d'estimateurs|(}% 
D'autre part, il est extr\^emement rare que l'on ne puisse utiliser qu'une 
r\`egle d'apprentissage pour un probl\`eme donn\'e. 
Chaque m\'ethode d\'epend en g\'en\'eral d'un ou plusieurs param\`etres 
(un mod\`ele sous-jacent, un niveau de r\'egularisation, 
un nombre de plus proches voisins, etc.) 
dont le choix impacte fortement le risque du pr\'edicteur final. 
Et bien souvent, on dispose de nombreuses m\'ethodes pour un probl\`eme donn\'e 
(par exemple, en classification on peut h\'esiter entre des partitions cubiques, 
les $k$~plus proches voisins, les SVM et les for\^ets al\'eatoires). 
Comment choisir la meilleur m\'ethode et ses param\`etres ? 

\medbreak

La validation crois\'ee fournit une r\'eponse \`a ces deux questions, 
dans un cadre tr\`es g\'en\'eral\footnote{% 
La validation crois\'ee ne s'applique malheureusement pas syst\'ematiquement, 
voir la parenth\`ese~\ref{VC.rk-cadre-le-plus-general} en section~\ref{VC.sec.def.gal}. }. 
Ce texte vise \`a d\'efinir cette famille de proc\'edures et \`a donner les principaux 
\'el\'ements de compr\'ehension disponibles \`a ce jour \`a son sujet. 
Nous donnons volontairement peu de r\'ef\'erences bibliographiques ; 
une bibliographie compl\`ete se trouve dans 
l'article de survol de \citet{arlo_2010} 
et l'article r\'ecent de \citet{arlo_2016}. 

\section{S\'election d'estimateurs} 
\label{VC.sec.pb}
Ce texte se place dans le cadre g\'en\'eral de la pr\'evision, 
tel que pr\'esent\'e par \citet{Arl_2016_JESchap2}, 
dont on utilise les notations et auquel on fait r\'eguli\`erement r\'ef\'erence 
par la suite. 

On peut alors formaliser le probl\`eme de s\'election d'estimateurs\footnote{%%
Compte-tenu de la terminologie introduite par \citet{Arl_2016_JESchap2}, 
il serait plus logique de parler de s\'election de r\`egles d'apprentissage. 
Nous utilisons n\'eanmoins ici l'expression \og{}s\'election d'estimateurs\fg{}, 
plus courante et plus concise.} % fin footnote 
comme suit. 

\medbreak

On dispose d'une collection de r\`egles d'apprentissage 
$(\fh_m)_{m \in \cM_n}$ 
et d'un \'echantillon $D_n$. 
On souhaite pouvoir choisir l'une de ces r\`egles $\fh_{\mh}$ 
\`a l'aide des donn\'ees uniquement. 
Cette question g\'en\'erale recouvre de nombreux exemples : 
\begin{itemize}
\item s\'election de mod\`eles : 
\SAindex{selection modeles@s\'election de mod\`eles}% 
pour tout $m \in \cM_n$, $\fh_m$ est une r\`egle par minimisation du risque empirique 
sur un mod\`ele $S_m$. 
\item choix d'un hyperparam\`etre : 
$m$~d\'esigne alors un ou plusieurs param\`etres r\'eels dont d\'epend la r\`egle $\fh_m$ 
(par exemple, le nombre de voisins $k$ pour les $k$ plus proches voisins, 
ou bien le param\`etre de r\'egularisation $\lambda$ pour les SVM). 
\item choix entre des m\'ethodes de natures diverses 
(par exemple, entre les $k$ plus proches voisins, les SVM et les for\^ets al\'eatoires). 
\end{itemize}

\medbreak

Les enjeux du probl\`eme et approches pour le r\'esoudre sont 
essentiellement les m\^emes que pour le probl\`eme de s\'election de mod\`eles, 
que \citet[section~3.9]{Arl_2016_JESchap2} d\'ecrit en d\'etail. 
Nous n'en rappelons donc ici que les grandes lignes. 

Tout d'abord, il faut pr\'eciser l'objectif (pr\'evision ou identification 
de la \og{}meilleure\fg{} r\`egle $\fh_m$). 
Ce texte se focalise sur l'objectif de pr\'evision : 
on veut minimiser le risque de l'estimateur final $\fh_{\mh(D_n)} (D_n)$ 
--- entra\^in\'e sur l'ensemble des donn\'ees, celles-l\`a m\^emes qui ont servi \`a choisir $\mh(D_n)$. 

Atteindre un tel objectif n\'ecessite d'\'eviter deux d\'efauts principaux : 
le surapprentissage 
\SAindex{surapprentissage}% 
(lorsqu'un pr\'edicteur \og{}colle\fg{} excessivement aux observations, 
ce qui l'emp\^eche de g\'en\'eraliser correctement) 
et le sous-apprentissage 
\SAindex{sous-apprentissage}% 
(quand un pr\'edicteur \og{}lisse\fg{} trop les observations 
et devient incapable de reproduire les variations du pr\'edicteur de Bayes). 
Il s'agit donc de trouver le meilleur compromis entre ces deux extr\^emes. 
Dans de nombreux cas\footnote{% 
Par exemple, pour des estimateurs lin\'eraires en r\'egression, 
des r\`egles par minimisation du risque empirique 
ou des r\`egles par moyennes locales. }, % fin foonote 
ceci se formalise sous la forme d'un compromis biais-variance 
\SAindex{compromis biais-variance}% 
ou approximation-estimation. 

Comment faire ? 
Comme pour la s\'election de mod\`eles, 
on consid\`ere habituellement des proc\'edures de la forme : 
\begin{equation} 
\label{VC.eq.sel-estim.def-mh-gal}
\mh \in \argmin_{m \in \cM_n} \bigl\{ \crit(m;D_n) \bigr\}
\, . 
\end{equation}
On peut les analyser avec le lemme fondamental de l'apprentissage 
\citep[lemme~2]{Arl_2016_JESchap2}. 
Deux strat\'egies principales sont possibles : 
choisir un crit\`ere $\crit(m;D_n)$ 
proche du risque $\cR_P( \fh_m(D_n) )$ simultan\'ement pour tous les 
$m \in \cM_n$, 
ou bien choisir un crit\`ere qui majore le risque. 
La validation crois\'ee suit la premi\`ere strat\'egie. 
Alors, au vu du raisonnement expos\'e par 
\citet[section~3.9]{Arl_2016_JESchap2}, 
il suffit\footnote{Il n'y a cependant pas \'equivalence, 
voir la section~\ref{VC.sec.sel-estim}.} de d\'emontrer que 
$\crit(m;D_n)$ est un bon estimateur 
\SAindex{risque!estimation@estimation d'un risque}% 
du risque\footnote{% 
En toute rigueur, il faut parler d'estimation du risque 
\emph{moyen}, le risque \'etant une quantit\'e al\'eatoire.
Par abus de langage, on parle dans ce texte d'estimation 
(et d'estimateurs) du risque de $\fh_m$, 
et du biais et de la variance de ces estimateurs. 
\`A chaque fois, il est sous-entendu que c'est le risque moyen qu'on estime, 
m\^eme si c'est le risque que l'on souhaite \'evaluer aussi pr\'ecis\'ement que possible. 
} % fin footnote
de $\fh_m(D_n)$ pour en d\'eduire que la proc\'edure d\'efinie par 
\eqref{VC.eq.sel-estim.def-mh-gal} fonctionne bien. 
\SAindex{selection estimateurs@s\'election d'estimateurs|)}% 

C'est pourquoi, apr\`es avoir d\'efini les proc\'edures de validation crois\'ee 
(section~\ref{VC.sec.def}), 
nous commen\c{c}ons par \'etudier leurs propri\'et\'es pour l'estimation du risque 
d'une r\`egle d'apprentissage fix\'ee (section~\ref{VC.sec.estim-risque}), 
avant d'aborder la s\'election d'estimateurs 
(section~\ref{VC.sec.sel-estim}). 
En guise de conclusion, la  section~\ref{VC.sec.concl} consid\`ere 
plusieurs questions pratiques importantes, 
dont celle du choix de la meilleure proc\'edure 
de validation crois\'ee.

\section{D\'efinition} 
\label{VC.sec.def}

\'Etant donn\'e une r\`egle d'apprentissage $\fh_m$, 
un \'echantillon $D_n$ et une fonction de co\^ut $c$, 
la validation crois\'ee estime le risque $\cR_P(\fh_m(D_n))$ 
en se fondant sur le principe suivant : 
on d\'ecoupe l'\'echantillon $D_n$ en deux sous-\'echantillons 
$D_n^{(e)}$ (l'\'echantillon d'entra\^inement) 
et $D_n^{(v)}$ (l'\'echantillon de validation), 
on utilise $D_n^{(e)}$ pour entra\^iner un pr\'edicteur 
$\fh_m(D_n^{(e)})$, 
puis on mesure l'erreur commise par ce pr\'edicteur 
sur les donn\'ees restantes $D_n^{(v)}$. 
Alors, du fait de l'ind\'ependance entre $D_n^{(e)}$ et $D_n^{(v)}$, 
on obtient une bonne \'evaluation\footnote{% 
Comme expliqu\'e en section~\ref{VC.sec.estim-risque}, 
c'est en r\'ealit\'e le risque de $\fh_m(D_n^{(e)})$ que l'on \'evalue, 
d'o\`u un l\'eger biais (beaucoup moins probl\'ematique que celui du risque 
empirique). } % fin footnote
du risque de $\fh_m(D_n)$. 
En particulier, on \'evite l'optimisme excessif\footnote{% 
Les raisons de cet optimisme sont d\'etaill\'ees par 
\citet[section~3.9]{Arl_2016_JESchap2}. } du risque 
empirique $\cRh_n(\fh_m(D_n))$. 
Et l'on peut proc\'eder \`a un ou plusieurs d\'ecoupages 
du m\^eme \'echantillon, 
d'o\`u un grand nombre de proc\'edures de validation crois\'ee possibles.

\subsection{Cas g\'en\'eral}
\label{VC.sec.def.gal}
Dans tout ce texte, 
$D_n = (X_i,Y_i)_{1 \leq i \leq n}$ 
d\'esigne un \'echantillon de variables al\'eatoires ind\'ependantes et de m\^eme loi~$P$. 
On suppose qu'une fonction de co\^ut $c: \cY \times \cY \to \R^+$ 
est fix\'ee et sert \`a d\'efinir le risque $\cR_P$ et le risque empirique 
$\cRh_n$ sur $D_n$ (une quantit\'e d\'efinie par \citet[section~3.1]{Arl_2016_JESchap2}). 

Un sous-ensemble propre\footnote{% 
Un sous-ensemble propre de $\{1, \ldots, n\}$ est une partie 
non-vide de $\{1, \ldots, n\}$ dont le compl\'ementaire est non-vide. 
La terminologie \og{}d\'ecoupage de l'\'echantillon\fg{} n'est pas classique ; 
nous l'utilisons ici pour clarifier l'exposition. 
} % fin footnote
$E$ de $\{1, \ldots, n\}$ est appel\'e \og{}d\'ecoupage\fg{} de l'\'echantillon. 
Il correspond \`a la partition de $D_n$ en deux sous-\'echantillons : 
\[
D_n^{E} := (X_i, Y_i)_{i \in E} 
\qquad \text{et} \qquad 
D_n^{E^c} := (X_i, Y_i)_{i \in \{1 , \ldots, n\} \backslash E} 
\, . 
\]
Pour tout d\'ecoupage $E$ de l'\'echantillon, 
on d\'efinit le risque empirique sur le sous-\'echantillon $D_n^{E}$ par : 
\[
\cRh_n^E: 
f\in \cF \mapsto \frac{1}{\card(E)} \sum_{i \in E}
c \bigl( f(X_i) , Y_i \bigr) 
\, . 
\]
On peut maintenant d\'efinir formellement les estimateurs 
par validation crois\'ee du risque. 

\begin{definition}[Validation crois\'ee]
\label{VC.def.VC}
\SAindex{validation simple}% 
Soit $\fh_m$ une r\`egle d'apprentissage. 
L'estimateur par validation \textup{(}simple\textup{)}\footnote{% 
Le terme anglais pour la validation, ou validation simple, est \og{}hold-out\fg{} : 
il s'agit de l'erreur sur des donn\'ees \og{}mises de c\^ot\'e\fg{} 
au moment de l'entra\^inement. } 
du risque de $\fh_m$ pour l'\'echantillon $D_n$ et le d\'ecoupage $E$ 
est d\'efini par :
\begin{align*} 
\cRh^{\mathrm{val}} (\fh_m;D_n;E) 
&= 
\cRh_n^{E^c} \bigl( \fh_m(D_n^E) \bigr) 
\\
&= 
\frac{1}{\card(E^c)} \sum_{i \in E^c} c \bigl( \fh_m( D_n^E; X_i) , Y_i \bigr) 
\, . 
\end{align*}
On appelle $D_n^E$ 
\SAindex{echantillon apprentissage@\'echantillon d'apprentissage}% 
\SAindex{echantillon entrainement@\'echantillon d'entra\^inement}% 
l'\'echantillon d'entra\^inement\footnote{Le terme \og{}\'echantillon d'apprentissage\fg{} est parfois 
utilis\'e pour d\'esigner l'\'echantillon d'entra\^inement ; 
il arrive aussi qu'on l'utilise pour d\'esigner la r\'eunion de l'\'echantillon 
d'entra\^inement et de l'\'echantillon de validation, lorsqu'une partie des donn\'ees 
est mise de c\^ot\'e dans un \'echantillon test.},  
tandis que $D_n^{E^c}$ est appel\'e 
\SAindex{echantillon validation@\'echantillon de validation}% 
\'echantillon de validation. 

L'estimateur par validation crois\'ee\footnote{En anglais : \og{}cross-validation\fg{}.} 
du risque de $\fh_m$ pour l'\'echantillon $D_n$ et la suite de d\'ecoupages 
$(E_j)_{1 \leq j \leq V}$ est d\'efini par : 
\begin{align*} 
\cRh^{\mathrm{vc}} \bigl( \fh_m; D_n; (E_j)_{1 \leq j \leq V} \bigr) 
= \frac{1}{V} \sum_{j=1}^V \cRh^{\mathrm{val}} ( \fh_m; D_n; E_j ) 
\, . 
\end{align*}
\'Etant donn\'e une famille de r\`egles d'apprentissage 
$(\fh_m)_{m \in \cM_n}$, 
la proc\'edure de s\'election d'estimateurs 
par validation crois\'ee associ\'ee 
est d\'efinie par :
\begin{equation*}
\mh^{\mathrm{vc}} \bigl( D_n; (E_j)_{1 \leq j \leq V} \bigr)  
\in \argmin_{m \in \cM_n} \Bigl\{  
\cRh^{\mathrm{vc}} \bigl( \fh_m; D_n; (E_j)_{1 \leq j \leq V} \bigr)  
\Bigr\}
\, . 
\end{equation*}
\end{definition}

\SAindex{risque!estimation@estimation d'un risque|(}% 
Une erreur courante mais grave est d'utiliser 
\[ 
\cRh^{\mathrm{vc}} \bigl( \fh_m; D_n; (E_j)_{1 \leq j \leq V} \bigr) 
\]
pour estimer le risque de $\fh_m(D_n)$ lorsque la r\`egle d'apprentissage $\fh_m$ d\'epend 
d\'ej\`a elle-m\^eme des donn\'ees. 
Par exemple, si $\fh_m$ est construite sur un sous-ensemble $m$ des covariables disponibles, 
et si ce sous-ensemble a \'et\'e choisi \`a l'aide d'une partie des donn\'ees $D_n$ 
(par une proc\'edure automatis\'ee ou simplement \og{}\`a l'\oe il\fg{}), 
alors on obtient une estimation \emph{fortement biais\'ee} du risque ! 
En g\'en\'eral, cette estimation est tr\`es optimiste,  
conduisant \`a sous-estimer le risque de pr\'evision r\'eel. 

Pour \'eviter ce biais, il faut prendre en compte la \emph{totalit\'e du processus} 
menant des donn\'ees $D_n$ au pr\'edicteur $\fh_m(D_n)$ 
(c'est-\`a-dire, \emph{tout} ce qu'on a fait \`a partir du moment o\`u l'on a eu acc\`es 
\`a au moins une observation). 
Formellement, il faut d\'ecrire comment $m$ d\'epend des donn\'ees, et le noter $\mh(D_n)$. 
On d\'efinit alors $\widetilde{f}: D_n \mapsto \fh_{\mh(D_n)} (D_n)$ 
puis on applique la validation crois\'ee \`a $\widetilde{f}$ 
en calculant : 
\[ \cRh^{\mathrm{vc}} \bigl( \widetilde{f} ; D_n; (E_j)_{1 \leq j \leq V} \bigr)  \, . \]

Le m\^eme probl\`eme se pose quand on veut estimer le risque de l'estimateur 
s\'electionn\'e par la validation crois\'ee (ou par toute autre proc\'edure de s\'election 
d'estimateurs). 
\SAindex{selection estimateurs@s\'election d'estimateurs}% 
Si l'on utilise la valeur (calcul\'ee en cours de proc\'edure) 
\[ 
\cRh^{\mathrm{vc}} \bigl( \fh_{ \mh^{\mathrm{vc}} ( D_n; (E_j)_{1 \leq j \leq V} )  } ; D_n; (E_j)_{1 \leq j \leq V} \bigr) 
= \min_{m \in \cM_n} \cRh^{\mathrm{vc}} \bigl( \fh_m; D_n; (E_j)_{1 \leq j \leq V} \bigr) 
\, ,
\]
alors on commet pr\'ecis\'ement l'erreur mentionn\'ee ci-dessus, 
et l'on sous-estime fortement le risque. 
Il faut donc d\'efinir 
\[
\widetilde{f}^{\mathrm{vc}}: D_n \mapsto \fh_{ \mh^{\mathrm{vc}} ( D_n; (E_{j,n})_{1 \leq j \leq V_n} )  }
\]
(en sp\'ecifiant bien comment la suite de d\'ecoupages 
$(E_{j,n})_{1 \leq j \leq V_n}$ est choisie pour chaque entier $n \geq 1$) 
et lui appliquer la validation crois\'ee en calculant 
\[ \cRh^{\mathrm{vc}} \bigl( \widetilde{f}^{\mathrm{vc}} ; D_n; (E_j)_{1 \leq j \leq V} \bigr)  \, . \]
Dans le cas de la validation simple, 
ceci conduit \`a un d\'ecoupage de l'\'echantillon en \emph{trois sous-\'echantillons} : 
un 
\SAindex{echantillon entrainement@\'echantillon d'entra\^inement}% 
\'echantillon d'entra\^inement $D_n^{E}$, 
un \'echantillon de validation $D_n^{V}$ 
\SAindex{echantillon validation@\'echantillon de validation}% 
--- pour choisir parmi les $\fh_m(D_n^E)$ ---, 
et un 
\SAindex{echantillon test@\'echantillon test}% 
\'echantillon test $D_n^{T}$ 
--- pour \'evaluer le risque de l'estimateur final $\fh_{\mh(D_n^E, D_n^V)} (D_n^E)$ ---, 
o\`u $E$, $V$ et $T$ forment une partition de $\{1, \ldots, n\}$. 

Signalons toutefois que d'autres approches permettent d'\'eviter la n\'ecessit\'e 
de recourir \`a un d\'ecoupage de l'\'echantillon en trois, 
en particulier le \og{}reusable hold-out\fg{} r\'ecemment propos\'e par 
\citet{dwor_2015}, qui repose sur l'id\'ee de n'acc\'eder \`a l'\'echantillon de validation 
que par l'interm\'ediaire d'un m\'ecanisme de 
confidentialit\'e diff\'erentielle\footnote{Le terme anglais est 
\og{}differential privacy\fg{}. }. 
\SAindex{risque!estimation@estimation d'un risque|)}% 

\begin{rqpar}[Cadre plus g\'en\'eral]
\label{VC.rk-cadre-le-plus-general} 
On peut d\'efinir la validation crois\'ee hors du cadre de pr\'evision 
ou pour une fonction de risque plus g\'en\'erale que celle d\'efinie 
par \citet{Arl_2016_JESchap2}. 
Il suffit en effet que le risque d'un \'el\'ement $f$ de l'ensemble $\cF$ des sorties 
possibles d'une r\`egle d'apprentissage v\'erifie : 
\begin{equation}
\label{VC.hyp.cadre-le-plus-general} 
\forall n \geq 1, \qquad \cR_P(f) = \E \bigl[ \cE(f;D_n) \bigr]
\end{equation}
o\`u $D_n$ est un \'echantillon de $n$ variables al\'eatoires ind\'ependantes et de loi~$P$,  
et $\cE$ est une fonction \`a valeurs r\'eelles, prenant en entr\'ee un \'el\'ement de $\cF$ 
et un \'echantillon de taille quelconque. 
La fonction $\cE$ mesure l'\og{}ad\'equation\fg{} entre $f$ et l'\'echantillon~$D_n$. 
\SAindex{validation simple}% 
L'estimateur par validation simple se d\'efinit alors par : 
\[
\cRh^{\mathrm{val}} (\fh_m;D_n;E) 
:= 
\cE  \Bigl( \fh_m \bigl( D_n^{E^c} \bigr) ; D_n^{E^c} \Bigr)
\]
et l'estimateur par validation crois\'ee s'en d\'eduit. 
Dans le cas de la pr\'evision, \eqref{VC.hyp.cadre-le-plus-general} 
est v\'erifi\'ee avec : 
\[
\cE \bigl( f; (X_i,Y_i)_{1 \leq i \leq n} \bigr) 
= \frac{1}{n} \sum_{i=1}^n c\bigl( f(X_i) , Y_i \bigr) = \cRh_n(f)
\, . 
\]
La relation \eqref{VC.hyp.cadre-le-plus-general}  est \'egalement v\'erifi\'ee dans d'autres cadres. 
Par exemple, en estimation de densit\'e, 
\SAindex{estimation de densit\'e}% 
il est classique de consid\'erer le risque quadratique 
\SAindex{risque!quadratique}% 
d\'efini par \eqref{VC.hyp.cadre-le-plus-general} avec :  
\[ 
\cE \bigl( f; (X_i)_{1 \leq i \leq n} \bigr) 
= \lVert f \rVert^2_{L^2} - \frac{2}{n} \sum_{i=1}^n f(X_i)
\, .
\]
L'exc\`es de risque correspondant est $\lVert f - \fst \rVert^2_{L^2}$ 
o\`u $\fst$ est la densit\'e (inconnue) des observations 
\citep{arlo_2016}. 
On peut aussi obtenir la distance de Kullback-Leibler entre $f$ et $ \fst$ comme exc\`es de risque 
en d\'efinissant le risque par 
\eqref{VC.hyp.cadre-le-plus-general} avec : 
\[ 
\cE \bigl( f; (X_i)_{1 \leq i \leq n} \bigr) 
= - \frac{1}{n} \sum_{i=1}^n \ln \bigl( f(X_i) \bigr)
\, ,  
\]
qui est l'oppos\'e de la log-vraisemblance de~$f$ au vu de l'\'echantillon $(X_i)_{1 \leq i \leq n}$. 
\end{rqpar}

\subsection{Exemples}
\label{VC.sec.def.ex}
Comme l'indique la d\'efinition~\ref{VC.def.VC}, 
il y a autant de proc\'edures de validation crois\'ee 
que de suites de d\'ecoupages $(E_j)_{1 \leq j \leq V}$. 
Au sein de cette grande famille, 
certaines proc\'edures sont toutefois plus classiques que d'autres. 

La plupart des proc\'edures utilis\'ees v\'erifient les deux hypoth\`eses suivantes : 
\begin{gather}
\label{VC.eq.hyp-Ind}
\tag{\ensuremath{\mathbf{Ind}}}
(E_j)_{1 \leq j \leq V} 
\text{ est ind\'ependante de } D_n 
\\
\label{VC.eq.hyp-Reg}
\tag{\ensuremath{\mathbf{Reg}}}
\card(E_1) = \card(E_2) = \cdots = \card(E_V) 
= n_e \in \{ 1, \ldots, n-1 \} 
\, . 
\end{gather}
On suppose toujours dans ce texte que 
\eqref{VC.eq.hyp-Ind} et \eqref{VC.eq.hyp-Reg} 
sont v\'erifi\'ees. 

\begin{rqpar}[Sur l'hypoth\`ese~\eqref{VC.eq.hyp-Ind}]
\label{VC.rk.hyp-Ind}
L'hypoth\`ese \eqref{VC.eq.hyp-Ind} garantit que l'\'echantillon 
d'entra\^inement $D_n^{E_j}$ et l'\'echantillon de validation 
$D_n^{E_j^c}$ sont ind\'ependants pour tout $j$, 
ce qui est crucial pour l'analyse men\'ee 
en section~\ref{VC.sec.estim-risque.biais} notamment. 
Cependant, il est parfois sugg\'er\'e de choisir $(E_j)_{1 \leq j \leq V}$ 
en utilisant l'\'echantillon $D_n$ pour diverses raisons : 
\begin{itemize}
\item pour que l'ensemble du support de $P_X$ 
soit repr\'esent\'e dans chaque \'echantillon d'entra\^inement 
et chaque \'echantillon de validation. 
\item en classification, pour que toutes les classes soient 
\SAindex{classification supervis\'ee}% 
repr\'esent\'ees dans chaque \'echantillon d'entra\^inement 
et chaque \'echantillon de validation 
(en particulier lorsque les effectifs des classes sont tr\`es 
d\'es\'equilibr\'es, 
\SAindex{classification supervis\'ee!multiclasse}% 
ou lorsque le nombre de classes est grand). 
\end{itemize}
Nous ne connaissons cependant pas de r\'esultat th\'eorique 
justifiant l'int\'er\^et d'un tel choix. 
En r\'egression, dans les simulations de \citet[section~6.2]{brei_1992}, 
une telle strat\'egie n'a pas d'impact sur les performances. 
\end{rqpar}

\begin{rqpar}[\'Echantillon ordonn\'e et hypoth\`ese~\eqref{VC.eq.hyp-Ind}]
%%\label{VC.rk.ech-ord-et-hyp-Ind}
%
Souvent, en pratique, on dispose d'un \'echantillon 
\og{}ordonn\'e\fg{}. 
Par exemple, lorsque $\cX = \R$, on a souvent 
$X_1 \leq X_2 \leq \cdots \leq X_n$ 
(ce qui signifie, si $D_n$ contient bien les r\'ealisations de $n$ variables 
al\'eatoires ind\'ependantes de loi~$P$, que l'\'echantillon initial a \'et\'e r\'eordonn\'e). 
Alors, si l'on veut utiliser une proc\'edure de validation crois\'ee 
v\'erifiant l'hypoth\`ese~\eqref{VC.eq.hyp-Ind}, 
il faut prendre soin d'appliquer au pr\'ealable une permutation al\'eatoire 
uniforme des indices $\{1, \ldots, n\}$. 
Sinon, le d\'ecoupage $E = \{1, \ldots, n/2\}$ 
ne peut pas \^etre consid\'er\'e ind\'ependant de $D_n$. 
Notons toutefois que ceci n'est pas n\'ecessaire pour les proc\'edures 
\og{}leave-one-out\fg{} et \og{}leave-$p$-out\fg{}.  
\end{rqpar}

\begin{rqpar}[Sur l'hypoth\`ese~\eqref{VC.eq.hyp-Reg}]
%%\label{VC.rk.hyp-Reg}
%
Nous ne connaissons pas d'argument th\'eorique en faveur des proc\'edures 
de validation crois\'ee v\'erifiant \eqref{VC.eq.hyp-Reg}, 
si ce n'est qu'elles sont plus simples \`a analyser que les autres. 
Il n'emp\^eche que l'hypoth\`ese \eqref{VC.eq.hyp-Reg} 
est toujours v\'erifi\'ee (au moins approximativement) 
dans les applications. 
Les r\'esultats th\'eoriques mentionn\'es dans ce texte 
restent valables (au premier ordre) 
lorsque \eqref{VC.eq.hyp-Reg} n'est v\'erifi\'ee qu'approximativement, 
cas in\'evitable si l'on utilise la validation crois\'ee 
\og{}$V$-fold\fg{} avec $n$ non-divisible par~$V$.   
\SAindex{validation croisee@validation crois\'ee!V-fold@$V$-fold}% 
\end{rqpar}

\medbreak

Parmi les proc\'edures v\'erifiant les hypoth\`eses \eqref{VC.eq.hyp-Ind} et 
\eqref{VC.eq.hyp-Reg}, on peut distinguer deux approches. 
Soit l'on consid\`ere la suite de \emph{tous} les d\'ecoupages 
$E_j$ tels que $\card(E_j) = n_e$. 
Lorsque $n_e = n-1$, on obtient le 
\SAindex{validation croisee@validation crois\'ee!leave-one-out}% 
leave-one-out\footnote{% 
En fran\c{c}ais, la proc\'edure leave-one-out peut 
\^etre nomm\'ee validation crois\'ee \og{}tous sauf un\fg{} : chaque d\'ecoupage laisse 
exactement une observation hors de l'\'echantillon 
d'entra\^inement.  
En anglais, on trouve aussi les noms \og{}delete-one cross-validation\fg{}, 
\og{}ordinary cross-validation\fg{}, 
et m\^eme parfois simplement \og{}cross-validation\fg{}. 
Dans le cas o\`u $\fh_m$ est un estimateur des moindres carr\'es en r\'egression lin\'eaire 
\citep[exemple~4 en section~3.2]{Arl_2016_JESchap2}, 
le leave-one-out est parfois appel\'e PRESS (Prediction Sum of Squares), ou PRESS de Allen ; 
\SAindex{PRESS}% 
notons que ce terme d\'esigne parfois directement 
la formule simplifi\'ee \eqref{VC.eq.LOO-reg-lin} que l'on peut 
d\'emontrer dans ce cadre.} : % fin footnote
\begin{align*}
\cRh^{\mathrm{loo}}( \fh_m ; D_n ) 
:=& 
\cRh^{\mathrm{vc}} \Bigl( \fh_m ; D_n; \bigl( \{ j \}^c \bigr)_{1 \leq j \leq n} \Bigr)
\\
=& \frac{1}{n} \sum_{j=1}^n c \bigl( \fh_m( D_n^{(-j)} ; X_j ) , Y_j \bigr)
\end{align*}
o\`u $D_n^{(-j)} = (X_i,Y_i)_{1 \leq i \leq n, \, i \neq j}$. 
Dans le cas g\'en\'eral, en posant $p = n-n_e$, 
on obtient le 
\SAindex{validation croisee@validation crois\'ee!leave-p-out@leave-$p$-out}% 
leave-$p$-out\footnote{% 
En anglais, on trouve aussi les termes 
\og{}delete-$p$ cross-validation\fg{} 
et \og{}delete-$p$ multifold cross-validation\fg{}.} : % fin footnote
\begin{align*}
\cRh^{\mathrm{lpo}}( \fh_m ; D_n ; p) 
:=& 
\cRh^{\mathrm{vc}} \Bigl( \fh_m ; D_n; \bigl\{ E \subset \{1, \ldots, n\}
 \, / \, \card(E) = n-p \bigr\} \Bigr)
\, . 
\end{align*}

\medbreak

En pratique, il est souvent trop co\^uteux algorithmiquement 
(voire impossible) 
d'utiliser le leave-one-out ou le leave-$p$-out. 
Une deuxi\`eme approche est donc n\'ecessaire : 
n'explorer que partiellement l'ensemble des 
$\binom{n}{n_e}$ d\'ecoupages possibles 
avec un \'echantillon d'entra\^inement de taille~$n_e$. 

Consid\'erer un seul d\'ecoupage am\`ene \`a la validation simple 
\SAindex{validation simple}% 
ou \og{}hold-out\fg{} ; 
toutes ces proc\'edures sont \'equivalentes car on a fait l'hypoth\`ese~\eqref{VC.eq.hyp-Ind}. 
En revanche, d\`es que l'on consid\`ere un nombre de d\'ecoupages 
$V \in \bigl[ 2 , \binom{n}{n_e} \bigr[$, 
%%$V$ tel que \[ 2 \leq V < \binom{n}{n_e} \, , \]
plusieurs proc\'edures non-\'equivalentes sont possibles. 

\SAindex{validation croisee@validation crois\'ee!V-fold@$V$-fold}% 
La plus classique est la validation crois\'ee par blocs, 
appel\'ee validation crois\'ee \og{}$V$-fold\fg{} 
ou \og{}$k$-fold\fg{}. 
On se donne une partition $(B_j)_{1 \leq j \leq V}$ de 
$\{1, \ldots, n\}$ en $V$ blocs de m\^eme taille\footnote{% 
Il faut prendre les $B_j$ de m\^eme taille pour avoir 
l'hypoth\`ese \eqref{VC.eq.hyp-Reg}. 
Lorsque $V$ ne divise pas $n$, il suffit de les prendre 
de tailles \'egales \`a un \'el\'ement pr\`es ; les performances th\'eoriques et 
pratiques sont alors similaires \`a ce que l'on a quand  
\eqref{VC.eq.hyp-Reg} est v\'erifi\'ee exactement.}, % fin footnote
puis on proc\`ede \`a un \og{}leave-one-out par blocs\fg{}, 
c'est-\`a-dire, on utilise la suite de d\'ecoupages 
$(B_j^c)_{1 \leq j \leq V}$ : 
\begin{align*}
\cRh^{\mathrm{vf}} \bigl( \fh_m ; D_n ; (B_j)_{1 \leq j \leq V} \bigr) 
:=& \cRh^{\mathrm{vc}} \bigl( \fh_m ; D_n ; (B_j^c)_{1 \leq j \leq V} \bigr) 
\\
=& \frac{1}{V} \sum_{j=1}^V \cRh_n^{B_j} \biggl( \fh_m \Bigl( D_n^{B_j^c} \Bigr) \biggr)
\, . 
\end{align*}

\SAindex{validation croisee@validation crois\'ee!Monte-Carlo}%  
On peut \'egalement proc\'eder \`a une validation crois\'ee Monte-Carlo 
(ou validation crois\'ee r\'ep\'et\'ee), 
en choisissant $E_1, \ldots, E_V$ al\'eatoires, 
ind\'ependants et de loi uniforme sur l'ensemble 
des parties de taille $n_e$ de $\{1, \ldots, n\}$. 

\begin{remarque}[$V$-fold ou Monte-Carlo ?]
\label{VC.sec.def.ex.rk-VF-ou-MC}
\SAindex{validation croisee@validation crois\'ee!V-fold@$V$-fold}% 
\SAindex{validation croisee@validation crois\'ee!Monte-Carlo}% 
On discute dans la suite 
les m\'erites de ces deux approches. 
Intuitivement, on peut d\'ej\`a dire que la validation crois\'ee $V$-fold 
pr\'esente l'avantage de faire un usage \og{}\'equilibr\'e\fg{} des donn\'ees : 
chaque observation est utilis\'ee exactement $V-1$ fois 
pour l'entra\^inement et une fois pour l'apprentissage. 
Ce n'est en rien garanti avec la validation crois\'ee Monte-Carlo. 
En revanche, on peut s'interroger sur les inconv\'enients 
de toujours utiliser ensemble (soit pour l'entra\^inement, soit pour la validation) 
les observations d'un m\^eme bloc. 
L'approche \og{}Monte-Carlo\fg{}, par son caract\`ere al\'eatoire, 
permet d'\'eviter d'\'eventuels biais induits par ce lien entre observations. 
\SAindex{validation croisee@validation crois\'ee!incompl\`ete \'equilibr\'ee}% 
Notons qu'il existe une mani\`ere d'\'eviter ces deux \'ecueils : 
la validation crois\'ee incompl\`ete \'equilibr\'ee\footnote{% 
En anglais, on parle de \og{}balanced-incomplete cross-validation\fg{}, 
qui s'appuie sur la notion de \og{}balanced-incomplete block-design\fg{}.} % fin footnote 
\citep[section~4.3.2]{arlo_2010}, 
qui pr\'esente l'inconv\'enient de n'\^etre possible que pour d'assez grandes valeurs de $V$.
\end{remarque}

Signalons enfin que bien d'autres proc\'edures de 
validation crois\'ee \og{}non-exhaustives\fg{} existent. 
\SAindex{validation croisee@validation crois\'ee!V-fold repetee@$V$-fold r\'ep\'et\'ee}%  
Par exemple, avec la validation crois\'ee $V$-fold r\'ep\'et\'ee, 
on choisit plusieurs partitions $(B_j^{\ell})_{1 \leq j \leq V}$, 
$\ell \in \{1, \ldots, L\}$, et l'on explore l'ensemble des d\'ecoupages 
\[
\bigl\{  ( B_j^{\ell} )^c  \,/ \, j \in \{ 1 , \ldots, V \} 
    \text{ et } \ell \in \{1, \ldots, L \} \bigr\} 
\, . 
\]

\subsection{Astuces algorithmiques}
\label{VC.sec.def.algo}
La complexit\'e algorithmique du calcul \og{}na\"if\fg{} de 
\[
\cRh^{\mathrm{vc}} \bigl( \fh_m; D_n; (E_j)_{1 \leq j \leq V} \bigr) 
= \frac{1}{V} \sum_{j=1}^V \cRh_n^{E_j^c} \bigl( \fh_m(D_n^{E_j}) \bigr) 
\]
est de l'ordre de $V$ fois celle de l'entra\^inement de $\fh_m$ 
sur un \'echantillon de taille $n_e$ (c'est en g\'en\'eral le plus co\^uteux), 
plus l'\'evaluation de $\fh_m(D_n^{E_j})$ en $n-n_e$ points. 
Il est cependant parfois possible de faire beaucoup plus rapide. 

\medbreak

Tout d'abord, on dispose dans certains cas de formules closes 
pour l'estimateur par validation crois\'ee du risque de $\fh_m$, 
au moins pour le leave-one-out et le leave-$p$-out\footnote{% 
Au vu des r\'esultats des sections~\ref{VC.sec.estim-risque} et 
\ref{VC.sec.sel-estim}, en particulier la 
proposition~\ref{VC.pro.var-ho-geq-lpo} en section~\ref{VC.sec.estim-risque.var} 
qui montre que la variance de la validation crois\'ee est minimale 
pour le leave-$p$-out --- \`a $n_e$ fix\'ee ---, 
il semble inutile de consid\'erer un autre type de validation crois\'ee 
quand on sait calculer rapidement tous les estimateurs 
par leave-$p$-out. } \citep[section~9]{arlo_2010}. 
\SAindex{validation croisee@validation crois\'ee!leave-one-out}% 
\SAindex{validation croisee@validation crois\'ee!leave-p-out@leave-$p$-out}% 
\SAindex{moindres carres@moindres carr\'es!regression lineaire@r\'egression lin\'eaire|(}% 
Par exemple, si $\fh_m$ est un estimateur des moindres carr\'es en r\'egression lin\'eaire, 
la formule de Woodbury 
\citep[section~2.7]{press_etal_NRC1992} 
permet de d\'emontrer \citep{golu_1979} : 
\begin{align}
\notag 
\cRh^{\mathrm{loo}} \bigl( \fh_m ; (x_i,y_i)_{1 \leq i \leq n} \bigr) 
&= \frac{1}{n} \sum_{i=1}^n \frac{\Bigl[ y_i - \fh_m \bigl( (x_i,y_i)_{1 \leq i \leq n} ; x_i \bigr) \Bigr]^2}{1 - \mathbf{H}_{i,i}}
\\
\label{VC.eq.LOO-reg-lin}
&= \frac{1}{n} \sum_{i=1}^n \frac{\bigl( y_i - (\mathbf{H} \mathbf{y})_i \bigr)^2}{1 - \mathbf{H}_{i,i}}
\\
\notag \text{o\`u} \qquad 
\mathbf{H} 
&= \bX \bigl( \bX^{\top} \bX \bigr)^{-1} \bX^{\top} 
\qquad \text{et} \qquad 
\mathbf{y} = (y_i)_{1 \leq i \leq n} \in \R^n
\, . 
\end{align}
Le calcul de l'estimateur leave-one-out avec \eqref{VC.eq.LOO-reg-lin} est 
aussi co\^uteux que celui 
d'\emph{un seul} entra\^inement de $\fh_m$ sur $D_n$, 
via le calcul de la matrice~$\mathbf{H}$. 

\begin{rqpar}[Validation crois\'ee g\'en\'eralis\'ee]
\label{VC.rk.GCV}
\SAindex{validation croisee@validation crois\'ee!generalisee@g\'en\'eralis\'ee (GCV)}% 
La formule close \eqref{VC.eq.LOO-reg-lin}  obtenue pour le leave-one-out 
en r\'egression lin\'eaire a conduit \`a en d\'efinir 
une version \og{}invariante par rotation\fg{} \citep{golu_1979}, 
appel\'ee validation crois\'ee g\'en\'eralis\'ee ou GCV 
(de l'anglais \og{}generalized cross-validation\fg{}).  
Par rapport \`a la formule \eqref{VC.eq.LOO-reg-lin}, 
les d\'enominateurs $1-\mathbf{H}_{i,i}$ 
sont remplac\'es par $1 - n^{-1} \tr(\mathbf{H})$ : 
\[
\mathrm{GCV} ( \fh_m ; D_n ) 
= \frac{\lVert \mathbf{y} - \mathbf{H} \mathbf{y} \rVert^2 }{n - \tr(\mathbf{H}) } 
\, . 
\]
Ce crit\`ere s'applique, plus g\'en\'eralement, 
\`a tout estimateur \og{}lin\'eaire\fg{}  
en r\'egression avec le co\^ut quadratique, 
notamment les $k$ plus proches voisins 
\SAindex{k plus proches voisins@$k$ plus proches voisins}% 
\citep[section~5.4]{Arl_2016_JESchap2}, 
les estimateurs par noyau 
\SAindex{noyau (r\`egle d'apprentissage)}% 
\citep[section~5.5]{Arl_2016_JESchap2}
et les estimateurs ridge.  
\SAindex{regression@r\'egression!ridge}% 
\citet{efro_1986} explique pourquoi, 
malgr\'e son nom, GCV est beaucoup plus proche 
des crit\`eres $C_p$ et $C_L$ de Mallows 
que de la validation crois\'ee proprement dite. 
\end{rqpar}
\SAindex{moindres carres@moindres carr\'es!regression lineaire@r\'egression lin\'eaire|)}% 

\medbreak

Par ailleurs, m\^eme en l'absence de formule close, 
on peut r\'eduire la complexit\'e algorithmique de la validation crois\'ee 
en entra\^inant d'abord $\fh_m$ sur l'\'echantillon $D_n$ tout entier 
(une fois pour toutes), 
puis, pour chaque d\'ecoupage $E_j$, 
en \og{}mettant \`a jour\fg{} $\fh_m(D_n)$ afin d'obtenir 
$\fh_m(D_n^{E_j^c})$. 
Lorsque cette mise \`a jour est efficace, 
le gain algorithmique est important. 
Cette id\'ee s'applique dans plusieurs cadres, 
dont l'analyse discriminante (lin\'eaire ou quadratique) 
\SAindex{analyse discriminante!lineaire@lin\'eaire}% 
\SAindex{analyse discriminante!quadratique}% 
et les $k$~plus proches voisins 
\SAindex{k plus proches voisins@$k$ plus proches voisins}% 
\citep[section~9]{arlo_2010}. 
\SAindex{moindres carres@moindres carr\'es!regression lineaire@r\'egression lin\'eaire}% 
Consid\'erons ici \`a nouveau l'estimateur des moindres carr\'es en r\'egression lin\'eaire. 
Son calcul n\'ecessite d'inverser la matrice $\bX^{\top} \bX$ de taille $p$, 
ce qui a un co\^ut de l'ordre de $p^3$. 
Lorsque $n-n_e$ est significativement plus petit que~$n$, 
on peut utiliser la formule de Woodbury 
\citep[section~2.7]{press_etal_NRC1992} pour d\'eduire 
$(\bX_{(E_j^c)}^{\top} \bX_{(E_j^c)})^{-1}$ de $(\bX^{\top} \bX)^{-1}$ 
\`a moindres frais, 
\[ 
\text{o\`u} \qquad 
\bX_{(E_j^c)} = (x_{i,k})_{1 \leq i \leq n , k \in E_j^c} 
\, . 
\]
\SAindex{regression@r\'egression!ridge}% 
La formule de Woodbury est \'egalement utile en r\'egression ridge, 
o\`u l'essentiel du temps de calcul de l'estimateur est consacr\'e 
\`a l'inversion de la m\^eme matrice $\bX^{\top} \bX$.

\medbreak

\SAindex{selection estimateurs@s\'election d'estimateurs|(}% 
Enfin, dans un contexte de s\'election d'estimateurs, 
il n'est pas toujours n\'ecessaire de calculer  
\[
\cRh^{\mathrm{vc}} \bigl( \fh_m; D_n; (E_j)_{1 \leq j \leq V} \bigr) 
\]
pour chaque $m \in \cM_n$. 
Les valeurs de l'erreur de validation obtenues sur les premiers d\'ecoupages 
$E_1$, $E_2$, $E_3$, $\ldots$ peuvent suffire \`a \'eliminer certains $\fh_m$ 
(et donc de gagner en temps de calcul),  
sans trop perdre sur la qualit\'e du pr\'edicteur final 
\citep{krue_2015}.

\subsection{Variantes}
\label{VC.sec.def.var}
Pour le probl\`eme de s\'election d'estimateurs, 
il y a plusieurs variantes de la validation crois\'ee, 
qui ne suivent pas la d\'efinition~\ref{VC.def.VC} 
mais reposent sur le m\^eme principe d'entra\^inement et validation 
selon plusieurs d\'ecoupages successifs.

\medbreak

\citet{yang_2006,yang_2007} propose la 
\SAindex{validation croisee@validation crois\'ee!vote@(par) vote}% 
\og{}validation 
crois\'ee par vote\fg{}\footnote{% 
Le terme anglais est \og{}cross-validation with voting\fg{}. 
Par opposition, Yang nomme \og{}cross-validation with averaging\fg{} 
la validation crois\'ee habituelle, celle de la d\'efinition~\ref{VC.def.VC}. }, 
lorsque l'objectif est d'\emph{identifier} la meilleure r\`egle d'apprentissage 
(comme expliqu\'e par \citet[section~3.9]{Arl_2016_JESchap2}; 
voir aussi la remarque~\ref{VC.rk.identification} 
en section~\ref{VC.sec.concl.choix}). 
Pour chacun des $j \in \{1, \ldots, V\}$, 
on s\'electionne un estimateur par validation simple : 
\begin{equation}
\label{VC.eq.VC-vote.mhj}
\mh_j \in \argmin_{m \in \cM_n} \bigl\{ \cRh^{\mathrm{val}} (\fh_m;D_n;E_j) \bigr\} 
\, . 
\end{equation}
Ensuite, on r\'ealise un vote majoritaire parmi les $\mh_j$ 
pour d\'eterminer $\mh$. 
Clairement, ceci n'a de sens que lorsque $\cM_n$ est discret. 
Yang propose cette variante dans un contexte o\`u $\cM_n$ est fini 
et de petite taille. 
Supposons par exemple que l'on veut choisir, pour un probl\`eme de classification, 
entre les plus proches voisins, la r\'egression logistique 
et les for\^ets al\'eatoires. 
Si les param\`etres de chaque m\'ethode sont choisis 
par une boucle interne de validation crois\'ee, 
on a $\card(\cM_n) = 3$. 
Cela fait donc sens d'effectuer un vote majoritaire 
parmi les $\mh_j$ obtenus sur $V \gg 3$ d\'ecoupages diff\'erents. 

\medbreak

\SAindex{validation croisee@validation crois\'ee!agregee@agr\'eg\'ee}% 
La validation crois\'ee agr\'eg\'ee\footnote{En anglais, on utilise les termes 
\og{}CV bagging\fg{} ou \og{}averaging cross-validation\fg{}, 
pour un ensemble de m\'ethodes similaires \`a celle qui est d\'ecrite ici. } 
est une variante largement utilis\'ee en pratique 
pour ses bonnes performances en pr\'evision, 
mais peu mentionn\'ee dans la litt\'erature \citep{jung_2015,mail_2016}. 
L'id\'ee est de ne pas \emph{s\'electionner} l'un des $\fh_m$ 
\SAindex{agregation@agr\'egation}% 
mais d'en \emph{combiner} plusieurs pour obtenir un pr\'edicteur 
encore plus performant (parfois meilleur que le choix oracle, 
d'apr\`es des r\'esultats exp\'erimentaux). 
Comme pour la validation crois\'ee par vote, 
pour chaque d\'ecoupage $E_j$, $j \in \{1, \ldots, V\}$, on s\'electionne 
$\mh_j$ d\'efini par \eqref{VC.eq.VC-vote.mhj}. 
%%%%
%%%\[
%%%\mh_j \in \argmin_{m \in \cM_n} \bigl\{ \cRh^{\mathrm{val}} (\fh_m;D_n;E_j) \bigr\} 
%%%\, . 
%%%\]
%
Ensuite, on construit un pr\'edicteur $\widetilde{f} (D_n)$ 
en agr\'egeant les pr\'edicteurs $\fh_{\mh_j} (D_n)$ obtenus 
avec chaque d\'ecoupage. 
En r\'egression, on fait une moyenne : 
\[ 
\widetilde{f} (D_n) = \frac{1}{V} \sum_{j=1}^V \fh_{\mh_j} (D_n)
\, . 
\]
En classification, on proc\`ede \`a un vote majoritaire : 
\[
\widetilde{f} (D_n ; x) 
= \argmax_{y \in \cY} \card \bigl\{ j \in \{1, \ldots, V\} \, / \, 
\fh_{\mh_j} (D_n ; x) = y \bigr\}
\, . 
\]
Cette id\'ee d'agr\'egation est \`a rapprocher du bagging, 
\SAindex{bagging}% 
bien que la validation crois\'ee agr\'eg\'ee ne co\"incide pas exactement 
avec le bagging appliqu\'e \`a la validation simple. 
\SAindex{selection estimateurs@s\'election d'estimateurs|)}% 

\section{Estimation du risque : biais et variance}
\label{VC.sec.estim-risque}
\SAindex{risque!estimation@estimation d'un risque|(}% 

Si l'on utilise la validation crois\'ee pour estimer le risque (moyen) 
d'une r\`egle d'apprentissage $\fh_m$ fix\'ee, 
il est naturel de s'int\'eresser \`a deux quantit\'es : 
son biais et sa variance.

\subsection{Biais}
\label{VC.sec.estim-risque.biais}

Sous les hypoth\`eses \eqref{VC.eq.hyp-Ind} et \eqref{VC.eq.hyp-Reg}, 
l'esp\'erance d'un crit\`ere par validation crois\'ee g\'en\'eral 
se calcule ais\'ement : 
\begin{equation} 
\label{VC.eq.E.gal}
\E \Bigl[ \cRh^{\mathrm{vc}} \bigl( \fh_m ; D_n ; (E_j)_{1 \leq j \leq V} \bigr) \Bigr]
= \E \Bigl[ \cR_P \bigl( \fh_m (D_{n_e}) \bigr) \Bigr]
\end{equation} 
o\`u $D_{n_e}$ d\'esigne un \'echantillon de $n_e$ variables ind\'ependantes 
et de m\^eme loi~$P$. 

\begin{proof}
Par d\'efinition de la validation crois\'ee, on a : 
\[
\E \Bigl[ \cRh^{\mathrm{vc}} \bigl( \fh_m ; D_n ; (E_j)_{1 \leq j \leq V} \bigr) \Bigr]
= \frac{1}{V} \sum_{j=1}^V 
\E \biggl[ \cRh_n^{E_j^c} \Bigl( \fh_m \bigl(D_{n}^{E_j} \bigr) \Bigr) \biggr]
\, . 
\]
D'apr\`es l'hypoth\`ese \eqref{VC.eq.hyp-Ind}, 
$D_n^{E_j}$ et $D_n^{E_j^c}$ sont deux \'echantillons ind\'ependants 
de variables ind\'ependantes de loi~$P$, 
donc 
\[
\E \Bigl[ \cRh^{\mathrm{vc}} \bigl( \fh_m ; D_n ; (E_j)_{1 \leq j \leq V} \bigr) \Bigr]
= \frac{1}{V} \sum_{j=1}^V 
\E \biggl[ \cR_P \Bigl( \fh_m \bigl(D_{n}^{E_j} \bigr) \Bigr) \biggr]
\, . 
\]
Comme l'hypoth\`ese \eqref{VC.eq.hyp-Reg} garantit que les $E_j$ sont tous de m\^eme taille, on a 
\[
\E \biggl[ \cR_P \Bigl( \fh_m \bigl(D_{n}^{E_j} \bigr) \Bigr) \biggr]
= 
\E \Bigl[ \cR_P \bigl( \fh_m (D_{n_e}) \bigr) \Bigr]
\]
pour tout $j$, d'o\`u le r\'esultat. 
\end{proof}

En vue d'estimer le risque moyen 
\[
\E \Bigl[ \cR_P \bigl( \fh_m (D_n) \bigr) \Bigr]
\, ,
\]
d'apr\`es~\eqref{VC.eq.E.gal}, 
le biais de la validation crois\'ee s'\'ecrit : 
\begin{equation}
\label{VC.eq.biais.gal}
\E \Bigl[ \cR_P \bigl( \fh_m (D_{n_e}) \bigr) \Bigr]
- \E \Bigl[ \cR_P \bigl( \fh_m (D_n) \bigr) \Bigr]
\, . 
\end{equation}
En particulier, il ne d\'epend pas du nombre $V$ de d\'ecoupages ! 
C'est seulement une fonction de la taille $n_e$ de l'\'echantillon d'entra\^inement 
(en plus de $n$, $P$ et $\fh_m$). 
La mani\`ere dont le risque moyen varie avec la taille $n$ de l'\'echantillon 
joue donc un r\^ole cl\'e dans l'analyse en esp\'erance de la validation crois\'ee. 

\medbreak

Supposons tout d'abord que le risque moyen diminue quand on a 
plus d'observations : 
\begin{equation}
\label{VC.eq.regle-intelligente}
\forall P \in \cM_1(\cX \times \cY), 
\qquad 
n \in \N \backslash \{0\} \mapsto 
\E \Bigl[ \cR_P \bigl( \fh_m (D_n) \bigr) \Bigr] 
\text{ est d\'ecroissante,} 
\end{equation} 
en notant $\cM_1(\cX \times \cY)$ l'ensemble 
des mesures de probabilit\'e sur $\cX \times \cY$. 
Alors, le biais \eqref{VC.eq.biais.gal} est une fonction d\'ecroissante 
de la taille $n_e$ de l'\'echantillon d'entra\^inement. 
En particulier, il est minimal lorsque $n_e = n-1$ 
(par exemple, pour le leave-one-out). 
\SAindex{validation croisee@validation crois\'ee!leave-one-out}% 

\begin{remarque}[R\`egles intelligentes]
\label{VC.rk.regle-intelligente}
\SAindex{regle apprentissage@r\`egle d'apprentissage!intelligente}% 
\SAindex{classification supervis\'ee}% 
\SAindex{k plus proches voisins@$k$ plus proches voisins}% 
\SAindex{noyau (r\`egle d'apprentissage)}% 
\SAindex{partition!regle classification@r\`egle de classification}% 
L'hypoth\`ese \eqref{VC.eq.regle-intelligente}, qui semble faible au premier abord, 
est la d\'efinition d'une r\`egle d'apprentissage \og{}intelligente\fg{}\footnote{% 
En anglais, \og{}smart rule\fg{}.} % fin footnote
\citep[section~6.8]{devr_1996}. 
Mais attention ! Toutes les r\`egles d'apprentissage classiques ne sont pas \og{}intelligentes\fg{} : 
par exemple, la r\`egle du plus proche voisin 
et certaines r\`egles par noyau (avec un noyau et une fen\^etre fixes) 
ne sont pas intelligentes en classification binaire 
\citep[section~6.8 et probl\`emes 6.14--6.15]{devr_1996} ! 
Une r\`egle par partition sur une partition $\cA$ ind\'ependante de~$n$ 
n'est pas non plus intelligente, 
m\^eme si elle l'est \og{}presque\fg{} (voir les exercices 
\ref{VC.exo.intelligente-majorite} et \ref{VC.exo.intelligente-part-random}). 
\citet[probl\`eme~6.16]{devr_1996} conjecturent m\^eme 
qu'aucune r\`egle universellement consistante n'est intelligente. 
\SAindex{consistance!universelle}% 
\end{remarque}

\medbreak

Pour quantifier plus pr\'ecis\'ement le biais, 
faisons une hypoth\`ese plus forte, qui implique \eqref{VC.eq.regle-intelligente} : 
\begin{equation}
\label{VC.eq.hyp-risque-n-OLS}
\exists \alpha(m) \in \R, \, 
\beta(m) > 0, \, 
\forall n \geq 1, \qquad 
\E \Bigl[ \cR_P \bigl( \fh_m (D_n) \bigr) \Bigr] 
= \alpha(m)  + \frac{\beta(m)}{n} 
\, . 
\end{equation}
Par exemple, \eqref{VC.eq.hyp-risque-n-OLS} 
est v\'erifi\'ee en estimation de densit\'e 
\SAindex{moindres carres@moindres carr\'es!estimation de densit\'e}% 
par moindres carr\'es\footnote{% 
Ce cadre n'est pas un exemple de probl\`eme de pr\'evision, 
mais on peut tout de m\^eme y d\'efinir la validation crois\'ee, 
voir la parenth\`ese~\ref{VC.rk-cadre-le-plus-general} 
en section~\ref{VC.sec.def.gal}. } ;  % fin footnote
$\alpha(m)$ est alors l'erreur d'approximation 
\SAindex{erreur approximation@erreur d'approximation}% 
et $\beta(m)/n$ est l'erreur d'estimation 
\SAindex{erreur estimation@erreur d'estimation}% 
(deux quantit\'es d\'efinies par \citet[section~3.4]{Arl_2016_JESchap2}). 
\SAindex{partition!regle regression@r\`egle de r\'egression}% 
En r\'egression avec le co\^ut quadratique, 
\eqref{VC.eq.hyp-risque-n-OLS} 
est approximativement v\'erifi\'ee pour les r\`egles par partition 
\citep{arlo_2008}. 
Alors, le biais \eqref{VC.eq.biais.gal} d'un crit\`ere 
par validation crois\'ee s'\'ecrit :  
\[
\beta(m) \parenj{ \frac{1}{n_e} - \frac{1}{n} }
= \parenj{ \frac{n}{n_e} - 1 } \frac{\beta(m)}{n} 
\, . 
\]
On peut distinguer trois situations : 
\begin{itemize}
\item si $n_e \sim n$ (comme pour le leave-one-out), 
\SAindex{validation croisee@validation crois\'ee!leave-one-out}% 
alors le biais est n\'egligeable devant $\beta(m)/n$ : 
au premier ordre, la validation crois\'ee estime 
le risque moyen sans biais. 
\item si $n_e = \kappa n$ avec $\kappa \in \,\, ]0,1[$, 
alors le biais vaut 
$( \kappa^{-1} - 1 ) \beta(m) / n$ : 
la validation crois\'ee estime correctement l'erreur d'approximation 
$\alpha(m)$ mais surestime d'un facteur $\kappa^{-1}>1$ l'erreur d'estimation 
$\beta(m)/n$. 
\SAindex{validation croisee@validation crois\'ee!V-fold@$V$-fold}% 
C'est notamment le cas de la validation crois\'ee $V$-fold 
avec $\kappa^{-1} - 1 = 1/(V-1)$. 
\item si $n_e \ll n$, alors le biais est de l'ordre de 
\[
\frac{\beta(m)}{n_e} \gg \frac{\beta(m)}{n}
\, . 
\]
La validation crois\'ee surestime fortement l'erreur d'estimation,  
et donc aussi le risque (sauf si l'erreur d'approximation $\alpha(m)$ 
domine l'erreur d'estimation, auquel cas il peut n'y avoir quasiment pas de surestimation). 
\end{itemize}

\subsection{Correction du biais}
\label{VC.sec.estim-risque.corr}
\SAindex{validation croisee@validation crois\'ee!corrig\'ee|(}% 

Plut\^ot que de minimiser le biais en choisissant $n_e$ proche de $n$ 
(ce qui n\'ecessite souvent de consid\'erer un grand nombre $V$ 
de d\'ecoupages \`a cause de la variance), 
il est parfois possible de corriger le biais. 
\begin{definition}[Validation crois\'ee corrig\'ee]
\label{VC.def.VC-corrigee}
Soit $\fh_m$ une r\`egle d'apprentissage. 
L'estimateur par validation crois\'ee corrig\'ee\footnote{% 
Le terme anglais est \og{}bias-corrected cross-validation\fg{}.} %fin footnote
du risque de $\fh_m$ pour l'\'echantillon $D_n$ et la suite de d\'ecoupages 
$(E_j)_{1 \leq j \leq V}$ est d\'efini par :  
\begin{align*}
\cRh^{\mathrm{vc-cor}} \bigl( \fh_m; D_n; (E_j)_{1 \leq j \leq V} \bigr) 
&= 
\cRh^{\mathrm{vc}} \bigl( \fh_m; D_n; (E_j)_{1 \leq j \leq V} \bigr) 
\\
&\qquad 
+ \cRh_n \bigl( \fh_m (D_n) \bigr)
- \frac{1}{V} \sum_{j=1}^V \cRh_n \bigl( \fh_m (D_n^{E_j}) \bigr) 
\, . 
\end{align*}
\end{definition}

La validation crois\'ee corrig\'ee a \'et\'e propos\'ee par \citet{burm_1989}, 
qui la justifie par des arguments asymptotiques, 
en supposant que $\fh_m$ est \og{}r\'eguli\`ere\fg{}. 
\`A la suite de \citet{arlo_2008} et \citet{arlo_2016}, 
on peut d\'emontrer qu'elle est \emph{exactement} sans biais, pour tout $n\geq 1$, 
sous une hypoth\`ese similaire \`a \eqref{VC.eq.hyp-risque-n-OLS}. 

\begin{proposition}
\label{VC.pro.VC-corrigee.sans-biais}
On suppose \eqref{VC.eq.hyp-Ind} v\'erifi\'ee et qu'une constante $\gamma(m) \in \R$ existe  
telle que pour tout entier $n\geq 1$ :  
\begin{equation}
\label{VC.eq.hyp-penid-n-OLS}
\E \Bigl[ \cR_P \bigl( \fh_m (D_n) \bigr) 
- \cRh_n \bigl( \fh_m (D_n) \bigr) \Bigr] 
= \frac{\gamma(m)}{n} 
\, . 
\end{equation}
Alors, pour tout $n\geq 1$ et toute suite de d\'ecoupages $(E_j)_{1 \leq j \leq V}$, 
la validation crois\'ee corrig\'ee estime sans biais le risque moyen de $\fh_m$ :  
\begin{equation}
\label{VC.eq.pro.VC-corrigee.sans-biais}
\E \Bigl[ \cRh^{\mathrm{vc-cor}} \bigl( \fh_m; D_n; (E_j)_{1 \leq j \leq V} \bigr)  \Bigr] 
= \E \Bigl[ \cR_P \bigl( \fh_m (D_n) \bigr) \Bigr]
\, . 
\end{equation}
\end{proposition}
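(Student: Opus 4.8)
The strategy is to expand the expectation of $\cRh^{\mathrm{vc-cor}}$ using linearity and then identify each of the three groups of terms. Recall from Definition~\ref{VC.def.VC-corrigee} that
\[
\cRh^{\mathrm{vc-cor}} \bigl( \fh_m; D_n; (E_j)_{1 \leq j \leq V} \bigr)
= \cRh^{\mathrm{vc}} \bigl( \fh_m; D_n; (E_j)_{1 \leq j \leq V} \bigr)
+ \cRh_n \bigl( \fh_m (D_n) \bigr)
- \frac{1}{V} \sum_{j=1}^V \cRh_n \bigl( \fh_m (D_n^{E_j}) \bigr)
\, .
\]
First I would take expectations term by term. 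For the first term, equation~\eqref{VC.eq.E.gal} (which only uses \eqref{VC.eq.hyp-Ind} and \eqref{VC.eq.hyp-Reg}) gives $\E[\cRh^{\mathrm{vc}}] = \E[\cR_P(\fh_m(D_{n_e}))]$; but here we do not even need \eqref{VC.eq.hyp-Reg} — it suffices to recombine it with the third term. For the second term there is nothing to do: $\E[\cRh_n(\fh_m(D_n))]$ is what it is. The real work is the third term.

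The key step is to evaluate $\E\bigl[\cRh_n(\fh_m(D_n^{E_j}))\bigr]$ for a fixed $j$. Here I would \emph{not} use \eqref{VC.eq.hyp-Reg}; instead I would rewrite $\cRh_n(\fh_m(D_n^{E_j})) = \frac{\card(E_j)}{n}\cRh_n^{E_j}(\fh_m(D_n^{E_j})) + \frac{\card(E_j^c)}{n}\cRh_n^{E_j^c}(\fh_m(D_n^{E_j}))$, splitting the empirical risk on all $n$ points into the part computed on the training block and the part computed on the validation block. Under \eqref{VC.eq.hyp-Ind}, $D_n^{E_j}$ and $D_n^{E_j^c}$ are independent i.i.d.\ samples, so $\E[\cRh_n^{E_j^c}(\fh_m(D_n^{E_j}))] = \E[\cR_P(\fh_m(D_{\card(E_j)}))]$, while $\E[\cRh_n^{E_j}(\fh_m(D_n^{E_j}))] = \E[\cRh_{\card(E_j)}(\fh_m(D_{\card(E_j)}))]$ — the empirical risk of the learning rule trained on its own sample. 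Writing $n_{e,j} := \card(E_j)$, the difference that appears between the first and third blocks of $\cRh^{\mathrm{vc-cor}}$ is then, for each $j$,
\[
\E\bigl[\cR_P(\fh_m(D_{n_{e,j}}))\bigr] - \E\bigl[\cRh_n(\fh_m(D_n^{E_j}))\bigr]
= \frac{n_{e,j}}{n}\Bigl( \E[\cR_P(\fh_m(D_{n_{e,j}}))] - \E[\cRh_{n_{e,j}}(\fh_m(D_{n_{e,j}}))] \Bigr)
= \frac{n_{e,j}}{n}\cdot\frac{\gamma(m)}{n_{e,j}} = \frac{\gamma(m)}{n}\, ,
\]
using hypothesis~\eqref{VC.eq.hyp-penid-n-OLS} at sample size $n_{e,j}$.

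Averaging over $j \in \{1,\dots,V\}$, the $\frac{1}{V}\sum_j$ disappears and leaves exactly $\gamma(m)/n$, which is precisely $\E[\cR_P(\fh_m(D_n))] - \E[\cRh_n(\fh_m(D_n))]$ by \eqref{VC.eq.hyp-penid-n-OLS} at sample size $n$. Adding back $\E[\cRh_n(\fh_m(D_n))]$ from the second term yields $\E[\cR_P(\fh_m(D_n))]$, which is \eqref{VC.eq.pro.VC-corrigee.sans-biais}. The main obstacle — really the only subtle point — is the bookkeeping in the decomposition of $\cRh_n(\fh_m(D_n^{E_j}))$ into its $E_j$ and $E_j^c$ parts and the careful use of independence on the $E_j^c$ part; everything else is linearity of expectation and two applications of \eqref{VC.eq.hyp-penid-n-OLS}. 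Note in particular that \eqref{VC.eq.hyp-Reg} is never used, which is why the statement holds for \emph{any} sequence of splittings.
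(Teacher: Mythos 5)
Votre preuve est correcte et suit essentiellement la m\^eme d\'emarche que celle du texte : la d\'ecomposition de $\cRh_n$ en ses parties sur $E_j$ et $E_j^c$ est exactement l'identit\'e $\cRh_n^{E_j^c} - \cRh_n = \frac{\card(E_j)}{n} ( \cRh_n^{E_j^c} - \cRh_n^{E_j} )$ utilis\'ee dans la d\'emonstration originale, suivie du m\^eme argument d'ind\'ependance pour la partie validation et des deux m\^emes applications de \eqref{VC.eq.hyp-penid-n-OLS} (aux tailles $\card(E_j)$ puis $n$). La seule diff\'erence est d'ordre r\'edactionnel (vous passez \`a l'esp\'erance terme \`a terme plut\^ot que de r\'earranger d'abord les variables al\'eatoires), et votre observation que \eqref{VC.eq.hyp-Reg} n'est pas n\'ecessaire est \'egalement conforme \`a l'\'enonc\'e et \`a la preuve du texte.
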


\begin{proof}
On part de la d\'efinition de la validation crois\'ee corrig\'ee, 
en remarquant que : 
\[
\cRh_n^{E_j^c} - \cRh_n 
= 
\frac{\card(E_j)}{n} \bigl( \cRh_n^{E_j^c} - \cRh_n^{E_j} \bigr)
\, . 
\]
Alors, 
\begin{align*}
& \qquad 
\cRh^{\mathrm{vc-cor}} \bigl( \fh_m; D_n; (E_j)_{1 \leq j \leq V} \bigr) 
\\
&= 
\cRh_n \bigl( \fh_m (D_n) \bigr)
+ \frac{1}{V} \sum_{j=1}^V 
\Bigl[
( \cRh_n^{E_j^c} - \cRh_n) \bigl( \fh_m (D_n^{E_j}) \bigr) 
\Bigr]
\\
&= 
\cRh_n \bigl( \fh_m (D_n) \bigr)
+ \frac{1}{V} \sum_{j=1}^V 
\frac{\card(E_j)}{n}
\Bigl[
( \cRh_n^{E_j^c} - \cRh_n^{E_j}) \bigl( \fh_m (D_n^{E_j}) \bigr) 
\Bigr]
\\
&= 
\cR_P \bigl( \fh_m (D_n) \bigr)
+ (\cRh_n - \cR_P) \bigl( \fh_m (D_n) \bigr)
\\
&\qquad + \frac{1}{V} \sum_{j=1}^V 
\frac{\card(E_j)}{n}
\Bigl[
(\cRh_n^{E_j^c} - \cR_P) \bigl( \fh_m (D_n^{E_j}) \bigr) 
\Bigr]
\\
& \qquad 
+ \frac{1}{V} \sum_{j=1}^V 
\frac{\card(E_j)}{n}
\Bigl[
(\cR_P - \cRh_n^{E_j}) \bigl( \fh_m (D_n^{E_j}) \bigr) 
\Bigr]
\, . 
\end{align*}
Or, le troisi\`eme terme ci-dessus est d'esp\'erance nulle 
car $D_n^{E_j}$ et $D_n^{E_j^c}$ sont ind\'ependants 
d'apr\`es \eqref{VC.eq.hyp-Ind}. 
En utilisant l'hypoth\`ese \eqref{VC.eq.hyp-penid-n-OLS}, 
on en d\'eduit que 
\begin{align*}
& \qquad 
\E \Bigl[ 
\cRh^{\mathrm{vc-cor}} \bigl( \fh_m; D_n; (E_j)_{1 \leq j \leq V} \bigr) 
- \cR_P \bigl( \fh_m (D_n) \bigr) 
\Bigr]
\\
&=  \frac{- \gamma(m)}{n} 
+ \frac{1}{V} \sum_{j=1}^V 
\frac{\card(E_j)}{n} \frac{\gamma(m)}{\card(E_j)}
= 0
\, . 
\end{align*}
\end{proof}

\begin{rqpar}[Sur l'hypoth\`ese \eqref{VC.eq.hyp-penid-n-OLS}]
\label{VC.rk.eq.hyp-penid-n-OLS} 
%%% parenth\`ese~\ref{VC.rk.eq.hyp-penid-n-OLS} en section~\ref{VC.sec.estim-risque.corr}
\SAindex{selection modeles@s\'election de mod\`eles!p\'enalisation}% 
\SAindex{partition!regle regression@r\`egle de r\'egression}% 
L'hypoth\`ese \eqref{VC.eq.hyp-penid-n-OLS} porte sur l'esp\'erance de la 
p\'enalit\'e id\'eale (voir \citet[section~3.9]{Arl_2016_JESchap2}). 
Elle est v\'erifi\'ee en estimation de densit\'e par moindres carr\'es \citep{arlo_2016}, 
\SAindex{moindres carres@moindres carr\'es!estimation de densit\'e}% 
et elle l'est (approximativement) pour les r\`egles de r\'egression par partition 
avec le co\^ut quadratique \citep{arlo_2008}. 
On peut noter que dans les deux cas, les hypoth\`eses 
\eqref{VC.eq.hyp-risque-n-OLS} et \eqref{VC.eq.hyp-penid-n-OLS} 
sont v\'erifi\'ees avec $\gamma(m) = 2 \beta(m)$, 
ce qui est li\'e \`a l'heuristique de pente 
\citep{birg_2006,arlo_2009b,lera_2012}. 
De mani\`ere plus g\'en\'erale, l'argument asymptotique de \citet{burm_1989} 
peut se r\'esumer \`a \'etablir que \eqref{VC.eq.hyp-penid-n-OLS} est approximativement valide 
sous des hypoth\`eses de r\'egularit\'e sur $\fh_m$ et sur le co\^ut~$c$. 
\end{rqpar}
\SAindex{validation croisee@validation crois\'ee!corrig\'ee|)}% 

\subsection{Variance}
\label{VC.sec.estim-risque.var}

L'\'etude de la variance des estimateurs par validation crois\'ee 
est plus d\'elicate que celle de leur esp\'erance. 
On peut toutefois \'etablir quelques r\'esultats g\'en\'eraux. 
En revanche, des r\'esultats quantitatifs pr\'ecis 
(et valables pour les proc\'edures par blocs) 
ne sont actuellement connus que dans des cadres particuliers, 
comme celui consid\'er\'e \`a la fin de cette section. 

\subsubsection{In\'egalit\'e g\'en\'erale} 

On a tout d'abord une in\'egalit\'e g\'en\'erale entre les variances 
des estimateurs par validation simple, 
par validation crois\'ee et par leave-$p$-out,  
\`a taille d'\'echantillon d'entra\^inement $n_e = n-p$ fix\'ee. 

\begin{proposition} 
%% proposition~\ref{VC.pro.var-ho-geq-lpo} en section~\ref{VC.sec.estim-risque.var}
\label{VC.pro.var-ho-geq-lpo}
\SAindex{validation simple}% 
\SAindex{validation croisee@validation crois\'ee!leave-p-out@leave-$p$-out}% 
Soit $n,V \geq 1$ des entiers, 
$D_n$ un \'echantillon de $n$ variables al\'eatoires ind\'ependantes 
et de m\^eme loi, $n_e \in \{1, \ldots, n-1 \}$ et 
$E_0, E_1, \ldots, E_V$ des parties de $\{1, \ldots, n\}$ 
ind\'ependantes de $D_n$ et 
telles que $\card(E_j) = n_e$ pour tout $j \in \{0, \ldots, V\}$. 
On a alors : 
\begin{align*}
\var\bigl( \cRh^{\mathrm{val}} \bigl( \fh_m; D_n; E_0 ) \bigr) 
&\geq 
\var\Bigl( \cRh^{\mathrm{vc}} \bigl( \fh_m; D_n; (E_j)_{1 \leq j \leq V} \bigr) \Bigr)
\\
&\geq 
\var \bigl( \cRh^{\mathrm{lpo}} ( \fh_m; D_n ; n - n_e ) \bigr)
\, . 
\end{align*}
\end{proposition}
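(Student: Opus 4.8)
L'idée centrale est que les trois estimateurs s'écrivent comme des moyennes d'une même famille de variables aléatoires $Z_E := \cRh^{\mathrm{val}}(\fh_m; D_n; E)$, indexée par les parties $E$ de $\{1,\ldots,n\}$ de cardinal $n_e$. Conditionnellement à $D_n$, la validation simple correspond à un tirage uniforme parmi ces variables (ou bien à une variable fixée $Z_{E_0}$, ce qui revient au même par \eqref{VC.eq.hyp-Ind} et par symétrie), la validation crois\'ee à une moyenne de $V$ tirages (indépendants ou non, selon la procédure — par exemple $V$-fold ou Monte-Carlo), et le leave-$p$-out à la moyenne de \emph{toutes} les $\binom{n}{n_e}$ variables. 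La première inégalité découle du fait classique que moyenner $V\geq 1$ copies d'une variable aléatoire ne peut pas augmenter la variance ; la seconde, du fait que le leave-$p$-out est l'espérance conditionnelle de $\cRh^{\mathrm{vc}}$ sachant $D_n$ (c'est la moyenne exhaustive), et qu'un conditionnement réduit la variance via la décomposition $\var(X) = \var(\E[X\mid D_n]) + \E[\var(X\mid D_n)]$.

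Concrètement, je commencerais par poser $U := \cRh^{\mathrm{vc}}(\fh_m; D_n; (E_j)_{1\leq j\leq V})$ et par observer que $\E[U \mid D_n]$ ne dépend que de $D_n$ : en effet, chaque $E_j$ étant indépendant de $D_n$ et de loi (marginale) uniforme sur les parties de cardinal $n_e$ — c'est le cas pour le $V$-fold comme pour le Monte-Carlo après symétrisation, et c'est ici qu'il faut invoquer \eqref{VC.eq.hyp-Ind} et \eqref{VC.eq.hyp-Reg} — on a $\E[U\mid D_n] = \binom{n}{n_e}^{-1}\sum_{\card(E)=n_e} Z_E = \cRh^{\mathrm{lpo}}(\fh_m; D_n; n-n_e)$. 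La décomposition de la variance donne alors immédiatement $\var(U) \geq \var(\E[U\mid D_n]) = \var(\cRh^{\mathrm{lpo}})$, ce qui est la seconde inégalité. Pour la première, j'écrirais $\var(Z_{E_0}) = \var(\E[Z_{E_0}\mid D_n]) + \E[\var(Z_{E_0}\mid D_n)]$ ; le premier terme est $\var(\cRh^{\mathrm{lpo}})$ (même calcul que ci-dessus, $\E[Z_{E_0}\mid D_n]$ étant la moyenne exhaustive par \eqref{VC.eq.hyp-Ind}), et le second est $\E[\var(Z_{E_1}\mid D_n)]$ par identité de loi. De même $\var(U) = \var(\cRh^{\mathrm{lpo}}) + \E[\var(U\mid D_n)]$, et conditionnellement à $D_n$ la moyenne de $V$ tirages échangeables a une variance $\leq$ celle d'un seul tirage (la covariance entre deux tirages distincts sans remise est $\leq$ la variance, car la matrice de covariance d'un échantillonnage échangeable est semi-définie positive), donc $\E[\var(U\mid D_n)] \leq \E[\var(Z_{E_1}\mid D_n)]$ : on conclut.

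Le point le plus délicat est le traitement uniforme des différentes procédures de validation crois\'ee couvertes par la proposition : le $V$-fold utilise des découpages \emph{déterministes} (une partition fixée en blocs), tandis que le Monte-Carlo les tire au hasard, et l'énoncé suppose seulement $\card(E_j)=n_e$ et l'indépendance de $D_n$. L'argument d'échangeabilité conditionnelle fonctionne dans les deux cas — pour le $V$-fold il faut remarquer que, $D_n$ étant i.i.d., la loi jointe de $(Z_{B_1^c},\ldots,Z_{B_V^c})$ est invariante par permutation des blocs, ce qui suffit pour majorer la variance de la moyenne par celle d'un terme — mais il convient de formuler proprement cette propriété d'échangeabilité plutôt que de supposer naïvement l'indépendance des $Z_{E_j}$. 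Une rédaction soigneuse passe donc par l'inégalité $\var\!\bigl(\frac1V\sum_{j=1}^V T_j\bigr) \leq \frac1V\sum_{j=1}^V \var(T_j)$ appliquée aux variables échangeables $T_j = \E[Z_{E_j}\mid D_n,\, E_1,\ldots,E_V]$ après le conditionnement adéquat, plutôt que par une hypothèse d'indépendance qui serait fausse pour le $V$-fold.
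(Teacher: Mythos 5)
Votre plan emprunte une route r\'eellement diff\'erente de celle du texte : vous passez par la d\'ecomposition de la variance conditionnelle $\var(Z) = \var(\E[Z \mid D_n]) + \E[\var(Z \mid D_n)]$ en identifiant le leave-$p$-out \`a l'esp\'erance conditionnelle $\E[\cRh^{\mathrm{vc}} \mid D_n]$, tandis que le texte n'utilise que l'in\'egalit\'e \'el\'ementaire $\var(Z_1) \geq \var\bigl(K^{-1}\sum_{i=1}^K Z_i\bigr)$, valable pour des variables \emph{de m\^eme loi} marginale sans aucune hypoth\`ese sur leur d\'ependance, et obtenue par simple convexit\'e. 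Pour la premi\`ere in\'egalit\'e, votre argument d'\'echangeabilit\'e aboutit, mais il est plus fort que n\'ecessaire : l'identit\'e des lois marginales des $\cRh^{\mathrm{val}}(\fh_m; D_n; E_j)$ suffit, et elle d\'ecoule du fait que remplacer $E$ par $E'$ de m\^eme taille revient \`a permuter les observations de $D_n$, qui sont i.i.d.

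Pour la seconde in\'egalit\'e en revanche, il y a une lacune : vous affirmez que chaque $E_j$ est de loi marginale uniforme sur les parties de cardinal $n_e$, \og{}apr\`es sym\'etrisation\fg{}. Or l'\'enonc\'e autorise des d\'ecoupages \emph{d\'eterministes} (cas du $V$-fold avec une partition fix\'ee), pour lesquels $\E[\cRh^{\mathrm{vc}} \mid D_n] = \cRh^{\mathrm{vc}} \neq \cRh^{\mathrm{lpo}}$ : la d\'ecomposition conditionnelle ne donne alors rien, et la sym\'etrisation que vous invoquez ne peut pas porter sur les $E_j$ seuls sans justification, puisque remplacer les $E_j$ par des d\'ecoupages al\'eatoires uniformes change a priori l'estimateur. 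Le texte contourne la difficult\'e en sym\'etrisant \emph{les donn\'ees} : pour toute permutation $\sigma$, $D_n^{\sigma}$ a m\^eme loi que $D_n$, et $\cRh^{\mathrm{lpo}}(\fh_m; D_n; n-n_e) = \frac{1}{n!}\sum_{\sigma} \cRh^{\mathrm{vc}}\bigl(\fh_m; D_n^{\sigma}; (E_j)_{1 \leq j \leq V}\bigr)$ est ainsi une moyenne de $n!$ variables de m\^eme loi que $\cRh^{\mathrm{vc}}\bigl(\fh_m; D_n; (E_j)_{1 \leq j \leq V}\bigr)$, \`a laquelle la m\^eme in\'egalit\'e de convexit\'e s'applique. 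Votre approche par esp\'erance conditionnelle est correcte et naturelle dans le cas Monte-Carlo --- c'est exactement la proposition~\ref{VC.pro.var-MCCV}, qui donne en prime le terme exact en $1/V$, et le texte signale d'ailleurs en parenth\`ese que $\var(Z) \geq \var(\E[Z \mid X])$ red\'emontre le lemme cl\'e --- mais pour couvrir le cas g\'en\'eral de la proposition~\ref{VC.pro.var-ho-geq-lpo}, il faut soit la compl\'eter par cette sym\'etrisation des donn\'ees (en v\'erifiant que la loi de l'estimateur est pr\'eserv\'ee), soit revenir \`a l'argument de lois identiques.
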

\begin{proof}
On commence par faire une remarque g\'en\'erale.
Si $Z_1, \ldots, Z_K$ sont des variables al\'eatoires r\'eelles de m\^eme loi, alors : 
\begin{equation}
\label{VC.eq.pro.var-ho-geq-lpo.pr.1}
\var(Z_1) \geq \var\parenj{ \frac{1}{K} \sum_{i=1}^K Z_i } 
\, . 
\end{equation}
En effet, par convexit\'e, on a : 
\[ 
\frac{1}{K} \sum_{i=1}^K Z_i^2
\geq 
\mathopen{}\left( \frac{1}{K} \sum_{i=1}^K Z_i \right)^2 \mathclose{}
\, . 
\]
En int\'egrant cette in\'egalit\'e, on obtient : 
\[ 
\E \bigl[ Z_1^2 \bigr]
\geq 
\E\crochj{ \mathopen{}\left( \frac{1}{K} \sum_{i=1}^K Z_i \right)^2 \mathclose{}  }
\, . 
\]
Le r\'esultat s'en d\'eduit en retranchant de chaque c\^ot\'e : 
\[ 
\Bigl( \E \bigl[ Z_1 \bigr] \Bigr)^2 
= 
\mathopen{}\left( \E\crochj{ \frac{1}{K} \sum_{i=1}^K Z_i } \right)^2 \mathclose{} 
\, . 
\]

Or, la loi de 
$\cRh^{\mathrm{val}} \bigl( \fh_m; D_n; E \bigr)$ 
est la m\^eme quel que soit $E \subset \{ 1, \ldots , n \}$ de taille $n_e$,  
y compris $E=E_j$ (par ind\'ependance des $E_j$ avec $D_n$). 
En effet, passer de $E$ \`a $E'$ de m\^eme taille \'equivaut \`a permuter les observations de $D_n$, 
ce qui ne change pas la loi de $D_n$ car les observations sont 
ind\'ependantes et de m\^eme loi. 
L'in\'egalit\'e \eqref{VC.eq.pro.var-ho-geq-lpo.pr.1} implique donc la premi\`ere in\'egalit\'e.

Pour obtenir la deuxi\`eme, notons $\Sigma_n$ l'ensemble des 
permutations de $\{1, \ldots, n\}$ 
et $D_n^{\sigma} = (X_{\sigma(i)},Y_{\sigma(i)})_{1 \leq i \leq n}$ 
pour toute permutation $\sigma \in \Sigma_n$. 
On a alors, pour tout $j \in \{ 1, \ldots, V \}$, 
\begin{align*}
\cRh^{\mathrm{lpo}}\bigl( \fh_m; D_n; n-n_e \bigr)
&= 
\frac{1}{n!} \sum_{\sigma \in \Sigma_n} 
\cRh^{\mathrm{val}} \bigl( \fh_m; D_n^{\sigma}; E_j \bigr) 
\, , 
\\
\text{et donc} \qquad 
\cRh^{\mathrm{lpo}}\bigl( \fh_m; D_n; n-n_e \bigr)
&= 
\frac{1}{n!} \sum_{\sigma \in \Sigma_n} 
\cRh^{\mathrm{vc}} \bigl( \fh_m; D_n^{\sigma}; (E_j)_{1 \leq j \leq V} \bigr)
\, . 
\end{align*}
Or, $D_n^{\sigma}$ a m\^eme loi que $D_n$, puisque 
les observations sont ind\'ependantes et de m\^eme loi, 
si bien que \eqref{VC.eq.pro.var-ho-geq-lpo.pr.1} s'applique 
et donne la deuxi\`eme in\'egalit\'e. 
\end{proof}
\begin{rqpar}[Variance conditionnelle et \eqref{VC.eq.pro.var-ho-geq-lpo.pr.1}]
En utilisant la formule \eqref{VC.eq.varcond} ci-apr\`es 
et le fait qu'une variance est positive ou nulle, on obtient que 
\begin{equation*} 
\var(Z) \geq  \var \bigl( \E [Z \, \vert \, X] \bigr) 
\, ,
\end{equation*}
ce qui d\'emontre l'in\'egalit\'e \eqref{VC.eq.pro.var-ho-geq-lpo.pr.1} 
d'une autre mani\`ere. 
\end{rqpar}

Le r\'esultat de la proposition~\ref{VC.pro.var-ho-geq-lpo} est naturel : 
\`a taille d'\'echantillon d'entra\^inement $n_e$ fix\'ee, 
il vaut mieux consid\'erer plusieurs d\'ecoupages qu'un seul, 
et il vaut encore mieux les consid\'erer tous. 
Mais c'est un r\'esultat limit\'e : 
l'am\'elioration est-elle stricte ? 
Si oui, combien gagne-t-on ? 
La proposition~\ref{VC.pro.var-ho-geq-lpo} ne permet pas de savoir. 
Pire encore : on ne peut pas comparer ce qui se passe avec deux et avec trois d\'ecoupages ! 

Intuitivement, plus le nombre $V$ de d\'ecoupages est grand, 
plus la variance est petite. 
La r\'ealit\'e est malheureusement un peu plus compliqu\'ee, 
et cela d\'epend de \emph{comment} on d\'ecoupe. 
Cinq d\'ecoupages bien choisis peuvent \^etre meilleurs que six mal choisis ! 

\subsubsection{Validation crois\'ee Monte-Carlo}

\SAindex{validation croisee@validation crois\'ee!Monte-Carlo|(}% 
Si l'on se limite \`a une famille particuli\`ere de d\'ecoupages 
(la validation crois\'ee Monte-Carlo), 
cette intuition peut \^etre justifi\'ee et quantifi\'ee. 

\begin{proposition}
%% proposition~\ref{VC.pro.var-MCCV} en section~\ref{VC.sec.estim-risque.var}
\label{VC.pro.var-MCCV}
Soit $n > n_e \geq 1$ des entiers. 
Si $E_1, \ldots, E_V \subset \{1, \ldots, n\}$ 
sont ind\'ependantes, de loi uniforme sur l'ensemble des parties 
de $\{1, \ldots, n\}$ de taille $n_e$, 
et ind\'ependantes de $D_n$, alors on a :  
\begin{align*}
&\qquad 
\var \Bigl( \cRh^{\mathrm{vc}} \bigl( \fh_m ; D_n ; (E_j)_{1 \leq j \leq V} \bigr) \Bigr) 
\\
&= 
\var \bigl( \cRh^{\mathrm{lpo}} ( \fh_m ; D_n ; n-n_e ) \bigr) 
%%\\
%%&\qquad 
+ \frac{1}{V} \E \biggl[ \var \Bigl( \cRh_n^{E_1^c} \bigl( \fh_m \bigl(D_n^{E_1}\bigr) \bigr) 
\, \Big\vert \, D_n \Bigr) \biggr]
\\
&= 
\var \bigl( \cRh^{\mathrm{lpo}} ( \fh_m ; D_n ; n-n_e ) \bigr) 
\\
&\qquad 
+ \frac{1}{V} \Bigl[ 
\var \bigl( \cRh^{\mathrm{val}}( \fh_m ; D_n ; E_1) \bigr)
- \var \bigl( \cRh^{\mathrm{lpo}} ( \fh_m ; D_n ; n-n_e ) \bigr) 
\Bigr]
\, . 
\end{align*}
\end{proposition}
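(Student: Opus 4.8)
The strategy is a conditioning argument, computing the variance by conditioning on $D_n$ and using the law of total variance. Write $Z_j := \cRh^{\mathrm{val}}(\fh_m; D_n; E_j) = \cRh_n^{E_j^c}(\fh_m(D_n^{E_j}))$, so that $\cRh^{\mathrm{vc}} = \frac1V \sum_{j=1}^V Z_j$. Since the $E_j$ are i.i.d.\ and independent of $D_n$, conditionally on $D_n$ the variables $Z_1, \ldots, Z_V$ are i.i.d.; hence
\[
\var\Bigl( \tfrac1V \sum_{j=1}^V Z_j \,\Big\vert\, D_n \Bigr) = \frac1V \var(Z_1 \mid D_n)
\qquad\text{and}\qquad
\E\Bigl[ \tfrac1V \sum_{j=1}^V Z_j \,\Big\vert\, D_n \Bigr] = \E[Z_1 \mid D_n]
\, .
\]

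**Identifying the conditional mean with the leave-$p$-out estimator.** The key observation is that $\E[Z_1 \mid D_n] = \cRh^{\mathrm{lpo}}(\fh_m; D_n; n-n_e)$. Indeed, $E_1$ is uniform over the $\binom{n}{n_e}$ subsets of size $n_e$, so conditionally on $D_n$ the expectation of $\cRh_n^{E_1^c}(\fh_m(D_n^{E_1}))$ over the choice of $E_1$ is exactly the average of $\cRh^{\mathrm{val}}(\fh_m; D_n; E)$ over all such $E$, which is by definition $\cRh^{\mathrm{lpo}}(\fh_m; D_n; n-n_e)$. This identity is the analogue of the permutation-average formula used in the proof of Proposition~\ref{VC.pro.var-ho-geq-lpo}, and it is the only place where the Monte-Carlo structure is genuinely used. (Note $\cRh^{\mathrm{lpo}}(\fh_m; D_n; n-n_e)$ is a deterministic function of $D_n$, so conditioning on $D_n$ makes it constant.)

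**Assembling via the law of total variance.** Now apply
\[
\label{VC.eq.varcond}
\var\Bigl( \cRh^{\mathrm{vc}} \Bigr) = \E\Bigl[ \var\bigl( \cRh^{\mathrm{vc}} \mid D_n \bigr) \Bigr] + \var\Bigl( \E\bigl[ \cRh^{\mathrm{vc}} \mid D_n \bigr] \Bigr)
\, .
\]
The first term equals $\frac1V \E\bigl[ \var(Z_1 \mid D_n) \bigr] = \frac1V \E\bigl[ \var( \cRh_n^{E_1^c}(\fh_m(D_n^{E_1})) \mid D_n ) \bigr]$, and the second term equals $\var\bigl( \cRh^{\mathrm{lpo}}(\fh_m; D_n; n-n_e) \bigr)$ by the previous step. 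This gives the first displayed equality. For the second equality, apply the same total-variance decomposition to a single $\cRh^{\mathrm{val}}(\fh_m; D_n; E_1)$ (i.e., take $V=1$): this yields $\var(\cRh^{\mathrm{val}}) = \E\bigl[ \var(\cRh_n^{E_1^c}(\fh_m(D_n^{E_1})) \mid D_n) \bigr] + \var(\cRh^{\mathrm{lpo}})$, so $\E\bigl[ \var(\cdot \mid D_n) \bigr] = \var(\cRh^{\mathrm{val}}) - \var(\cRh^{\mathrm{lpo}})$; substituting into the first equality finishes the proof.

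**Main obstacle.** There is no real obstacle here; the proof is essentially bookkeeping with conditional variances. The one point demanding care is the conditional-independence claim: one must check that conditioning on $D_n$ leaves $(E_j)_j$ i.i.d.\ (true since they are independent of $D_n$ and i.i.d.\ unconditionally) and that $Z_j$ is, conditionally on $D_n$, a deterministic function of $E_j$ alone, so that the $Z_j$ inherit conditional independence. Everything else follows from the elementary variance-of-a-sum-of-i.i.d.-terms identity and the definition of $\cRh^{\mathrm{lpo}}$ as an average over all size-$n_e$ subsets.
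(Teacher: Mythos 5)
Votre preuve est correcte et suit essentiellement la m\^eme d\'emarche que celle du texte : conditionnement par $D_n$, identification de $\E[Z_1 \mid D_n]$ avec $\cRh^{\mathrm{lpo}}$, application de la formule de la variance conditionnelle \eqref{VC.eq.varcond}, puis sp\'ecialisation au cas $V=1$ pour obtenir la seconde \'egalit\'e. Rien \`a redire.
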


\begin{proof}
Avant toutes choses, rappelons la d\'efinition de la variance conditionnelle 
\SAindex{variance conditionnelle}% 
et une propri\'et\'e fondamentale. 
Si $X$ et $Z$ sont deux variables al\'eatoires,  
si $Z$ est \`a valeurs r\'eelles et si $\E [ Z^2 \, \vert \, X ] < + \infty$, 
la variance conditionnelle de $Z$ sachant $X$ est d\'efinie par : 
\[
\var ( Z \, \vert \, X )
= \E \bigl[ Z^2 \, \vert \, X \bigr] - \E [Z \, \vert \, X]^2
\, . 
\]
On a alors, en int\'egrant cette d\'efinition : 
\begin{equation} 
\label{VC.eq.varcond}
\var(Z) = \E\bigl[ \var ( Z \, \vert \, X) \bigr] 
+ \var \bigl( \E [Z \, \vert \, X] \bigr) 
\, .
\end{equation}

En vue d'appliquer la formule \eqref{VC.eq.varcond}, on pose : 
\[ 
X = D_n \qquad \text{et} \qquad 
Z = \cRh^{\mathrm{vc}} \bigl( \fh_m; D_n; (E_j)_{1 \leq j \leq V} \bigr)
\, . 
\]
D'une part, puisque les $E_j$ sont de loi uniforme sur l'ensemble des parties de 
$\{ 1, \ldots, n\}$ de taille $n_e$, on obtient : 
\[ 
\E[Z \, \vert \, D_n] = \cRh^{\mathrm{lpo}} ( \fh_m ; D_n ; n-n_e )
\, . 
\]
D'autre part, conditionnellement \`a $D_n$, 
les $E_j$ sont ind\'ependantes et de m\^eme loi, si bien que l'on a : 
\[
\var( Z \, \vert \, D_n) = 
\frac{1}{V} \var \biggl( \cRh_n^{E_1^c} \Bigl( \fh_m \bigl(D_n^{E_1}\bigr) \Bigr) 
\, \Big\vert \, D_n \biggr) 
\]
Avec \eqref{VC.eq.varcond}, on en d\'eduit la premi\`ere formule. 

Lorsque $V=1$, la validation crois\'ee Monte-Carlo correspond 
\`a la validation simple, et donc :  
\begin{align*}
&\qquad \var\Bigl( 
\cRh^{\mathrm{val}} \bigl( \fh_m; D_n; E_1 \bigr)
\Bigr) 
\\
& = \var\Bigl( \cRh^{\mathrm{lpo}}\bigl( \fh_m; D_n; n-n_e \bigr) \Bigr)
+ 
\E \biggl[ \var \Bigl( \cRh_n^{E_1^c} \bigl( \fh_m \bigl(D_n^{E_1}\bigr) \bigr) 
\, \Big\vert \, D_n \Bigr) \biggr]
\, . 
\end{align*}
La deuxi\`eme formule en d\'ecoule. 
\end{proof}

La proposition~\ref{VC.pro.var-MCCV} d\'emontre que la variance du crit\`ere 
par validation crois\'ee Monte-Carlo 
est une fonction d\'ecroissante du nombre $V$ de d\'ecoupages, 
ce qui confirme l'intuition \'enonc\'ee pr\'ec\'edemment. 
Plus pr\'ecis\'ement, cette variance est une fonction affine de $1/V$, 
comprise entre la variance du crit\`ere par validation simple 
($V=1$) 
et la variance du crit\`ere leave-$p$-out correspondant 
(obtenu quand $V \rightarrow +\infty$). 
En particulier, lorsque les variances des crit\`eres 
par validation simple et par leave-$p$-out 
ne diff\`erent que par une constante multiplicative, 
l'essentiel de l'effet de $V$ sur la d\'ecroissance de la variance 
est obtenu avec un nombre $V$ de d\'ecoupages petit  
(par exemple, $V=20$ suffit \`a obtenir au moins $95\%$ de la 
d\'ecroissance esp\'er\'ee).  
\SAindex{validation croisee@validation crois\'ee!Monte-Carlo|)}% 

\begin{remarque}
\label{VC.rk.var.pro-ho-lpo-et-MCCV.generalisation}
\SAindex{validation croisee@validation crois\'ee!corrig\'ee}% 
Les propositions \ref{VC.pro.var-ho-geq-lpo} et \ref{VC.pro.var-MCCV} 
s'\'etendent \`a la validation crois\'ee corrig\'ee, 
et plus g\'en\'eralement \`a toute quantit\'e de la forme 
\[
\frac{1}{V} \sum_{j=1}^V F (D_n; E_j)
\]
avec $E_1, \ldots, E_V$ ind\'ependantes, de m\^eme loi et ind\'ependantes de $D_n$. 
Elles s'appliquent donc aussi \`a la 
\SAindex{validation croisee@validation crois\'ee!V-fold repetee@$V$-fold r\'ep\'et\'ee}% 
validation crois\'ee $V$-fold r\'ep\'et\'ee, 
d\'efinie \`a la fin de la section~\ref{VC.sec.def.ex}  
(voir l'exercice~\ref{VC.exo.var-VF-repete}). 
\end{remarque}

\medbreak

Il est malheureusement difficile d'\'enoncer un r\'esultat plus pr\'ecis en toute g\'en\'eralit\'e. 
On peut seulement d\'etailler comme suit la variance de l'estimateur 
par validation simple : 
\SAindex{validation simple}% 
\begin{equation}
\label{VC.eq.var-hold-out}
\begin{split}
\var \Bigl( \cRh_n^{E^c} \bigl( \fh_m (D_n^E) \bigr) \Bigr) 
&= 
\var \Bigl( \cR_P \bigl( \fh_m (D_n^E) \bigr) \Bigr)
\\
&\hspace{-1cm}+ 
\frac{1}{\card(E^c)} \E \biggl[ \var \Bigl( c \bigl( \fh_m ( D_n^E ; X) , Y \bigr) 
\, \Big\vert \, \fh_m (D_n^E) \Bigr) \biggr]
\, . 
\end{split}
\end{equation}
La formule ci-dessus souligne le r\^ole de l'\'echantillon de validation, 
qui fait diminuer la variance de l'estimation du risque de $\fh_m(D_n^E)$, 
jusqu'\`a atteindre (virtuellement) sa valeur minimale possible 
(la variance du risque de $\fh_m(D_n^E)$) 
lorsque l'\'echantillon de validation est infini. 
Soulignons n\'eanmoins l'int\'er\^et limit\'e de \eqref{VC.eq.var-hold-out}, 
qui ne permet pas (telle quelle) 
de comparer diff\'erentes valeurs de $n_e = \card(E)$, 
notamment parce que $n_e$ est li\'e \`a 
la taille de l'\'echantillon de validation $\card(E^c) = n-n_e$. 

\subsubsection{Quantification pr\'ecise dans un cadre particulier} 

\SAindex{moindres carres@moindres carr\'es!estimation de densit\'e|(}% 
En consid\'erant une r\`egle $\fh_m$ et un co\^ut $c$ particuliers, 
il est parfois possible d'\'enoncer des r\'esultats plus pr\'ecis, 
soit asymptotiquement (quand $n$ tend vers l'infini, avec 
$n_e = \card(E_j)$ \'eventuellement li\'e \`a $n$), 
soit pour une valeur de $n$ fix\'ee 
\citep[section~5.2]{arlo_2010}. 
\`A titre d'illustration, 
consid\'erons le cas de l'estimation de densit\'e 
avec le co\^ut des moindres carr\'es 
et $\fh_m$ un estimateur par histogrammes r\'eguliers 
de pas $h_m>0$ 
\citep[\'enoncent un r\'esultat valable pour 
des estimateurs par projection g\'en\'eraux]{arlo_2016}. 
On a alors, pour la validation crois\'ee $V$-fold :  
\SAindex{validation croisee@validation crois\'ee!V-fold@$V$-fold}% 
\begin{equation}
\label{VC.var-VFCV.histo-reg-densite}
\begin{split}
&\var \Bigl( \cRh^{\mathrm{vf}} \bigl( \fh_m ; D_n ; (B_j)_{1 \leq j \leq V} \bigr) \Bigr)
\\
&\qquad = \frac{1}{n^2} C_1^{\mathrm{vf}} (V,n) \cW_1 ( h_m, P)
+ \frac{1}{n} C_2^{\mathrm{vf}} (V,n) \cW_2 ( h_m, P)
\end{split}
\end{equation} 
o\`u les $\cW_i(h_m,P)$ ne d\'ependent que de $h_m$ et de la loi $P$ des observations 
\citep[en donnent une formule close en section~5]{arlo_2016}, 
\begin{align*}
C_1^{\mathrm{vf}} (V,n)
&= 
1 + \frac{4}{V-1} + \frac{4}{(V-1)^2} + \frac{1}{(V-1)^3} - \frac{V^2}{n (V-1)^2}
\\
\text{et} \qquad 
C_2^{\mathrm{vf}} (V,n)
&= 
\mathopen{} \left( 1 + \frac{V}{n(V-1)} \right)^2 \mathclose{}
\, . 
\end{align*}
En particulier, la variance diminue avec $V$ 
et le gain induit par l'augmentation de $V$ est limit\'e : 
pour $n$ assez grand, 
le premier terme varie de 10 \`a 1 quand $V$ augmente,  
tandis que le deuxi\`eme terme reste \`a peu pr\`es constant. 
Lorsque $h_m$ est fixe par rapport \`a $n$, 
la diminution de variance li\'ee \`a l'augmentation de $V$ est encore plus faible : 
elle ne se voit que dans le premier terme du membre de droite 
de \eqref{VC.var-VFCV.histo-reg-densite}, qui est un terme du deuxi\`eme ordre. 

\SAindex{validation croisee@validation crois\'ee!Monte-Carlo|(}%  
Une formule similaire peut \^etre \'etablie dans le m\^eme cadre pour la validation 
crois\'ee Monte-Carlo \citep[section~8.1]{arlo_2016} : 
\begin{equation}
\label{VC.var-MCCV.histo-reg-densite}
\begin{split}
& \var \Bigl( \cRh^{\mathrm{vc}} \bigl( \fh_m ; D_n ; (E_j)_{1 \leq j \leq V} \bigr) \Bigr)
\\
&\qquad 
= \frac{1}{n^2} C_1^{\mathrm{MC}} (V, n, n_e) \cW_1 ( h_m, P)
+ \frac{1}{n} C_2^{\mathrm{MC}} (V, n, n_e) \cW_2 ( h_m, P)
\end{split}
\end{equation} 
o\`u les $\cW_i$ sont les m\^emes qu'\`a l'\'equation \eqref{VC.var-VFCV.histo-reg-densite}, 
\begin{align*}
C_1^{\mathrm{MC}} (V,n,n_e)
&= 
\frac{1}{V} \parenj{ \frac{n^2}{n_e^2} + \frac{2 n^2}{n_e (n-n_e)} - \frac{1}{n} \frac{n^3}{n_e^3}  }
\\
&\qquad + \parenj{ 1 - \frac{1}{V} } \crochj{ 1 + \frac{1}{n-1} 
    \mathopen{} \left( \frac{n}{n_e} + 1 \right)^2 \mathclose{} - \frac{1}{n} \frac{n^2}{n_e^2}  }
\\
\text{et} \qquad 
C_2^{\mathrm{MC}} (V,n,n_e)
&= 
\frac{1}{V} \parenj{ \frac{n}{n-n_e} + \frac{1}{n^2} \frac{n^3}{n_e^3} }
+ \parenj{ 1 - \frac{1}{V} } \mathopen{} \left( 1 + \frac{1}{n} \frac{n}{n_e} \right)^2 \mathclose{}
\, . 
\end{align*}
Ainsi, lorsque $n$ tend vers l'infini et $n_e \sim \tau n$ avec $\tau \in \,\, ]0,1[$ fix\'e, 
les valeurs extr\^emes de la variance sont donn\'ees par 
\begin{align*}
C_1^{\mathrm{MC}} (1,n,n_e)
&\xrightarrow[n \rightarrow +\infty]{} \frac{1}{\tau^2} + \frac{2}{\tau (1-\tau)} > 11
\\
\text{et} \qquad 
C_2^{\mathrm{MC}} (1,n,n_e)
&\xrightarrow[n \rightarrow +\infty]{} \frac{1}{1-\tau} > 1
\end{align*}
\SAindex{validation simple}% 
d'une part (pour la validation simple), et par  
\begin{align*}
C_1^{\mathrm{MC}} (\infty,n,n_e)
\xrightarrow[n \rightarrow +\infty]{} 1
\qquad \text{et} \qquad 
C_2^{\mathrm{MC}} (\infty,n,n_e)
\xrightarrow[n \rightarrow +\infty]{} 1 
\end{align*}
\SAindex{validation croisee@validation crois\'ee!leave-p-out@leave-$p$-out}% 
d'autre part (pour le leave-$(n-n_e)$-out). 
Le gain li\'e \`a l'augmentation du nombre de d\'ecoupages est donc 
de l'ordre d'une constante (fonction de $\tau$ uniquement), 
et il n'est pas \'enorme 
(au plus un facteur $12$ lorsque $\tau=1/2$, par exemple). 

\SAindex{validation croisee@validation crois\'ee!V-fold@$V$-fold}% 
Les formules \eqref{VC.var-VFCV.histo-reg-densite} et \eqref{VC.var-MCCV.histo-reg-densite} 
permettent \'egalement d'\'evaluer l'int\'er\^et de consid\'erer une suite 
\og{}structur\'ee\fg{} de d\'ecoupages (avec le $V$-fold) 
par rapport \`a une suite al\'eatoire (avec la m\'ethode Monte-Carlo). 
Il suffit pour cela de comparer les valeurs des constantes 
$C_i$ pour une m\^eme taille d'\'echantillon d'entra\^inement 
$n_e = n (V-1)/V$ et un m\^eme nombre de d\'ecoupages $V$.  
Lorsque $n$ tend vers l'infini, 
\[ 
\forall V \geq 3, 
\qquad 
\frac{C_1^{\mathrm{MC}} \parenj{  V, n , \frac{n (V-1)}{V} }}{ C_1^{\mathrm{VF}} (V, n ) }
> 1
\]
et ce rapport tend vers $3$ lorsque $V$ tend vers l'infini, 
tandis que 
\[
\frac{C_2^{\mathrm{MC}} \parenj{  V, n , \frac{n (V-1)}{V} }}{ C_2^{\mathrm{VF}} (V, n ) }
\xrightarrow[n \rightarrow +\infty]{} 2 - \frac{1}{V} \in \crochj{ \frac{3}{2} , 2}
\, . 
\]
La validation crois\'ee par blocs permet donc d'avoir 
une variance plus faible qu'avec la validation crois\'ee Monte-Carlo, 
avec un gain de l'ordre d'une constante num\'erique, 
compris entre $3/2$ et $3$ environ\footnote{% 
Le gain exact d\'epend de l'ordre de grandeur relatif de 
$\cW_1(h_m,P)$ et $\cW_2(h_m,P)$. }.  

\begin{rqpar}[Validation crois\'ee corrig\'ee]
%\`u\label{VC.rk.var.VCcorr}
\SAindex{validation croisee@validation crois\'ee!corrig\'ee}%  
La variance du crit\`ere par validation crois\'ee corrig\'ee 
s'\'ecrit d'une mani\`ere similaire \`a \eqref{VC.var-VFCV.histo-reg-densite} 
(pour le $V$-fold) 
et \eqref{VC.var-MCCV.histo-reg-densite} 
(pour la proc\'edure Monte-Carlo), 
avec les m\^emes conclusions sur l'influence de~$V$. 
\end{rqpar}
\SAindex{moindres carres@moindres carr\'es!estimation de densit\'e|)}% 
\SAindex{validation croisee@validation crois\'ee!Monte-Carlo|)}%  

\medbreak

Notons pour finir que l'essentiel des conclusions obtenues ci-dessus 
pour les histogrammes en estimation de densit\'e sont 
probablement valables pour tout pr\'edicteur $\fh_m$ \og{}r\'egulier\fg{}, 
au moins asymptotiquement au vu des r\'esultats de \citet{burm_1989}.  
Toutefois, des comportements exp\'erimentaux diff\'erents sont rapport\'es 
dans la litt\'erature, 
notamment pour des pr\'edicteurs \og{}instables\fg{}, 
\SAindex{stabilit\'e}% 
\SAindex{arbre de decision@arbre de d\'ecision!CART}% 
\SAindex{reseau de neurones@r\'eseau de neurones}% 
\SAindex{selection modeles@s\'election de mod\`eles!p\'enalisation}% 
par exemple un arbre CART, 
un r\'eseau de neurones 
ou le r\'esultat d'une proc\'edure 
de s\'election de variables par p\'enalisation \og{}$\ell^0$\fg{} 
telle que AIC ou $C_p$ \citep{brei_1996}. 
\SAindex{AIC}% 
\SAindex{cp@$C_p$}% 
\citet[section~5.2]{arlo_2010} donnent d'autres r\'ef\'erences \`a ce sujet. 
\SAindex{risque!estimation@estimation d'un risque|)}% 

\section{Propri\'et\'es pour la s\'election d'estimateurs}
\label{VC.sec.sel-estim}
\SAindex{selection estimateurs@s\'election d'estimateurs|(}% 
Revenons au probl\`eme (d\'ecrit en section~\ref{VC.sec.pb}) 
de choisir parmi une famille de r\`egles d'apprentissage 
$(\fh_m)_{m \in \cM_n}$, avec un objectif de pr\'evision : 
on veut minimiser le risque du pr\'edicteur final 
$\fh_{\mh(D_n)} (D_n)$. 
\'Etant donn\'e une suite de d\'ecoupages $(E_j)_{1 \leq j \leq V}$ --- dont 
on suppose toujours qu'elle v\'erifie \eqref{VC.eq.hyp-Ind} et \eqref{VC.eq.hyp-Reg} --- 
la proc\'edure de validation crois\'ee correspondante s\'electionne :  
\[
\mh^{\mathrm{vc}} \bigl( D_n ; (E_j)_{1 \leq j \leq V} \bigr) 
\in \argmin_{m \in \cM_n} \Bigl\{ 
\cRh^{\mathrm{vc}} \bigl( \fh_m ; D_n ; (E_j)_{1 \leq j \leq V} \bigr) \Bigr\}
\, . 
\]

\subsection{Diff\'erence entre s\'election d'estimateurs et estimation du risque}
\label{VC.sec.sel-estim.diff-avec-estim-risq}
\SAindex{risque!estimation@estimation d'un risque}% 
La section~\ref{VC.sec.estim-risque} d\'etaille les performances 
de $\cRh^{\mathrm{vc}} ( \fh_m )$ comme estimateur du risque de $\fh_m$.  
Intuitivement, ceci devrait permettre d'\'evaluer les performances de 
la validation crois\'ee pour la s\'election d'estimateurs. 
Cette intuition est en partie correcte, mais en partie seulement, attention ! 
On constate en effet empiriquement que la meilleure proc\'edure de 
s\'election d'estimateurs ne correspond pas toujours au meilleur estimateur du risque 
\citep{brei_1992}. 
Il y a au moins deux raisons \`a ce ph\'enom\`ene. 

\subsubsection{R\^ole des incr\'ements}
Un premier \'el\'ement d'explication peut \^etre obtenu en consid\'erant 
l'exemple jouet suivant. 
Supposons que l'on veut comparer les proc\'edures 
\[
\mh_1 \in \argmin_{m \in \cM_n} \bigl\{ \cC_1(m;D_n) \bigr\}
\qquad \text{et} \qquad 
\mh_2 \in \argmin_{m \in \cM_n} \bigl\{ \cC_2(m;D_n) \bigr\}
\]
o\`u $\cC_1$ est un estimateur sans biais du risque 
et, pour tout $m \in \cM_n$, 
\[
\cC_2(m;D_n) = \cC_1(m;D_n) + Z
\]
avec $Z$ une variable al\'eatoire de loi normale 
$\cN(1,1)$ ind\'ependante de $D_n$ et de~$m$. 
Clairement, $\mh_1 = \mh_2$ et il n'y a aucune diff\'erence entre 
ces deux proc\'edures pour la s\'election d'estimateurs. 
En revanche, $\cC_2$ est bien plus mauvais que $\cC_1$ 
pour l'estimation du risque : 
$\cC_1$ est un estimateur sans biais du risque 
tandis que $\cC_2$ poss\`ede un biais strictement positif (\'egal \`a~$1$) 
et une variance strictement sup\'erieure \`a celle de $\cC_1$.
Consid\'erer seulement l'esp\'erance et la variance de $\cC_i(m)$ 
pour chaque $m \in \cM_n$ peut donc \^etre trompeur pour 
la s\'election d'estimateurs. 

Que faire pour corriger ce probl\`eme ? 
\`A la suite de \citet[section~4]{arlo_2016}, 
on peut remarquer que l'essentiel est de \emph{bien classer} 
les r\`egles $\fh_m$ les unes par rapport aux autres\footnote{%
\`A premi\`ere vue, l'essentiel est de bien classer la meilleure r\`egle 
parmi $(\fh_m)_{m \in \cM_n}$ par rapport aux autres. 
Mais si l'on demande que ceci soit encore vrai pour toute 
sous-collection $(\fh_m)_{m \in \cM_n'}$, $\cM_n' \subset \cM_n$, 
alors on demande \`a bien classer \emph{toutes} les r\`egles 
$\fh_m$ les unes par rapport aux autres.}. % fin footnote 
Autrement dit, une proc\'edure de la forme 
\[
\mh \in \argmin_{m \in \cM_n} \bigl\{ \cC(m; D_n) \bigr\}
\]
est bonne lorsque, 
avec grande probabilit\'e, 
pour tout $m,m' \in \cM_n$, 
\[ 
\signe \bigl( \cC(m;D_n) - \cC(m';D_n) \bigr) 
= \signe \bigl( \cR_P(\fh_m(D_n)) - \cR_P(\fh_{m'}(D_n) \bigr) 
\, , 
\]
sauf \'eventuellement lorsque $\fh_m$ et $\fh_{m'}$ 
ont des risques \og{}proches\fg{} 
(une erreur entre $m$ et $m'$ \'etant alors de faible importance pour le risque 
du pr\'edicteur final).  
Si l'on veut se focaliser sur une esp\'erance ou une variance 
(pour \'evaluer la qualit\'e d'un crit\`ere $\cC$ ou pour comparer 
deux crit\`eres $\cC_1$ et $\cC_2$), 
c'est donc 
\[ 
\var \bigl( \cC(m; D_n) - \cC(m';D_n) \bigr) 
\, , \qquad 
m,m' \in \cM_n \, , 
\]
qu'il faut consid\'erer ; 
la section~\ref{VC.sec.sel-estim.var} le fait pour le cas de la validation crois\'ee.

\subsubsection{Surp\'enalisation}
\SAindex{selection modeles@s\'election de mod\`eles!p\'enalisation|(}%  
Le paragraphe pr\'ec\'edent explique pourquoi, 
entre deux crit\`eres dont les incr\'ements ont la m\^eme esp\'erance, 
il vaut mieux choisir celui dont la variance des incr\'ements 
est la plus petite. 
Il est moins \'evident de comparer des crit\`eres qui n'ont pas la 
m\^eme esp\'erance. 

Pour ce faire, prenons le point de vue de la p\'enalisation 
\citep[section~3.9]{Arl_2016_JESchap2}, 
en posant\footnote{% 
En d\'efinissant $\penal(m;D_n) = \cC(m;D_n)  - \cRh_n(\fh_m(D_n))$, 
on peut \'ecrire tout crit\`ere $\cC$ comme un crit\`ere empirique p\'enalis\'e. } : % fin footnote 
\[ 
\cC(m;D_n) 
= \cRh_n \bigl( \fh_m(D_n) \bigr) + \penal(m;D_n)
\, . 
\]
L'id\'eal est alors que la p\'enalit\'e $\penal(m;D_n)$ 
soit proche de 
\[ 
\penal_{\mathrm{id}}(m;D_n) := \cR_P \bigl( \fh_m(D_n) \bigr) 
- \cRh_n \bigl( \fh_m(D_n) \bigr) 
\]
ou son esp\'erance (qui est, dans de nombreux cas, proportionnelle \`a la 
\og{}complexit\'e\fg{} de $\fh_m$). 
Consid\'erons alors, pour tout $C > 0$, 
la proc\'edure (th\'eorique) qui s\'electionne 
\[ 
m_C \in \argmin_{m \in \cM_n} \Bigl\{ \cRh_n \bigl( \fh_m(D_n) \bigr) 
+ C \E\bigl[ \penal_{\mathrm{id}}(m;D_n) \bigr] \Bigr\}
\, . 
\]
Lorsque $C=1$, $m_C$ suit le principe d'estimation sans biais du risque. 
\SAindex{selection estimateurs@s\'election d'estimateurs!principe d'estimation sans biais du risque}% 
Lorsque $C>1$, $m_C$ surp\'enalise, c'est-\`a-dire qu'elle choisit une r\`egle d'apprentissage 
moins \og{}complexe\fg{}. 
Lorsque $C<1$, $m_C$ sous-p\'enalise et choisit une r\`egle plus \og{}complexe\fg{}. 
La figure~\ref{VC.fig.surpen} repr\'esente, sur un exemple, la performance pour la s\'election 
d'estimateurs de $m_C$ en fonction de la constante de surp\'enalisation~$C$. 
\begin{figure}[!ht]
\centering
\includegraphics[width=0.8\textwidth]{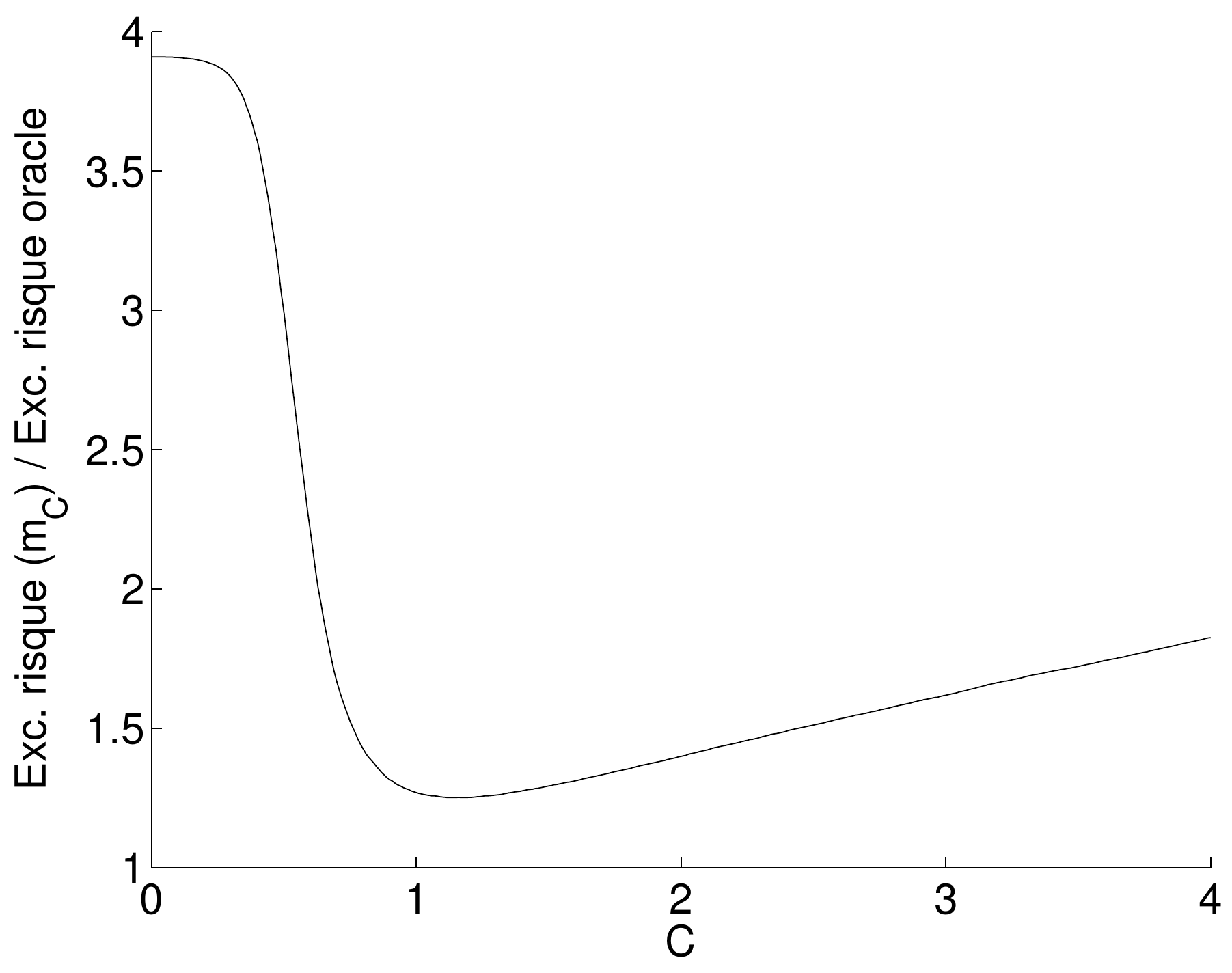}
\caption{\textit{Performance de $m_C$ pour la s\'election de mod\`eles 
\textup{(}mesur\'ee par l'esp\'erance de 
$\ell(\fst,\fh_{m_C})/\inf_{m \in \cM_n} \{ \ell(\fst,\fh_m) \}$\textup{)} 
en fonction de la constante de surp\'enalisation~$C$, 
\'evalu\'ee sur un jeu de donn\'ees simul\'e, 
en r\'egression lin\'eaire avec des estimateurs des moindres carr\'es 
\SAindex{moindres carres@moindres carr\'es!regression lineaire@r\'egression lin\'eaire}% 
\textup{(}m\^eme cadre que celui de \citep[figure~4]{Arl_2016_JESchap2}\textup{)}. 
Ici, la valeur optimale de $C$ est $1,12$. 
}}
\label{VC.fig.surpen}
\end{figure}
On peut observer que la performance est bonne pour $C=1$, 
et qu'elle est un peu meilleure lorsque $C$ est l\'eg\`erement plus grand. 
Surp\'enaliser (donc, utiliser un estimateur \emph{biais\'e} du risque) 
peut ainsi \^etre b\'en\'efique ! 

\SAindex{surapprentissage}% 
\`A notre connaissance, il n'existe pas de r\'esultat th\'eorique 
rendant pleinement compte de ce ph\'enom\`ene. 
On peut n\'eanmoins proposer l'hypoth\`ese suivante : 
en surp\'enalisant, on diminue la probabilit\'e\footnote{% 
Cette probabilit\'e reste non-nulle, \`a cause de la variabilit\'e 
du crit\`ere $\cC(m;D_n)$.} % fin footnote
de surapprendre fortement, 
au prix d'une diminution de la \og{}complexit\'e\fg{} 
moyenne de la r\`egle s\'electionn\'ee. 
Lorsque le gain li\'e au premier point est plus fort que la perte 
li\'ee au deuxi\`eme point, 
il est b\'en\'efique de surp\'enaliser. 

\SAindex{regle du 1 ecart-type@r\`egle du 1 \'ecart-type}% 
La r\`egle du \og{}1 \'ecart-type\fg{}\footnote{% 
En anglais, \og{}1 s.e. rule\fg{}.} de \citet{Breiman84} 
semble \^etre, empiriquement, un bon moyen de surp\'enaliser\footnote{% 
Pr\'ecisons toutefois que cette r\`egle n'est pas directement formul\'ee 
en ces termes. }.

La section~\ref{VC.sec.sel-estim.surpen} discute les 
cons\'equences de ce ph\'enom\`ene sur les proc\'edures de 
validation crois\'ee. 
\SAindex{selection modeles@s\'election de mod\`eles!p\'enalisation|)}%  

\subsection{Analyse au premier ordre : esp\'erance}
\label{VC.sec.sel-estim.E}
Au premier ordre, pour les proc\'edures de validation crois\'ee 
usuelles\footnote{% 
Les proc\'edures utilisant un \'echantillon 
de validation de taille $n-n_e \ll n$ \emph{et} un nombre de d\'ecoupages $V$ petit 
font exception. 
Les garanties th\'eoriques ne s'appliquent pas 
\citep{arlo_2010}, et l'on constate en pratique que ces 
proc\'edures sont extr\^emement variables et se comportent mal, 
m\^eme lorsque leur esp\'erance fournit une bonne proc\'edure 
de s\'election d'estimateurs.}, % fin footnote
les r\'esultats th\'eoriques connus \`a ce jour indiquent tous 
que la performance pour la s\'election d'estimateurs 
est celle de la proc\'edure (th\'eorique) qui minimise l'esp\'erance du crit\`ere correspondant. 

Pour prendre un exemple, 
supposons que les hypoth\`eses \eqref{VC.eq.hyp-Ind},  
\eqref{VC.eq.hyp-Reg} et~\eqref{VC.eq.hyp-risque-n-OLS} 
sont v\'erifi\'ees. 
Alors, pour tout $m,m' \in \cM_n$, on a : 
\begin{align}
\label{VC.eq.E-incr-Rcv-hyp-OLS}
\begin{split}
%\notag 
& 
\E \Bigl[ \cRh^{\mathrm{vc}} \bigl( \fh_m ; D_n ; (E_j)_{1 \leq j \leq V} \bigr) 
- \cRh^{\mathrm{vc}} \bigl( \fh_{m'} ; D_n ; (E_j)_{1 \leq j \leq V} \bigr) \Bigr]
\\
&\qquad = \alpha(m) - \alpha(m') + \frac{\beta(m) - \beta(m')}{n_e}
\end{split}
\\
\label{VC.eq.E-incr-R-hyp-OLS}
\begin{split}
\text{et} \qquad &
\E \Bigl[ \cR_P \bigl( \fh_m (D_n) \bigr) 
- \cR_P \bigl( \fh_{m'} (D_n) \bigr)\Bigr] 
\\
&\qquad 
= \alpha(m) - \alpha(m') + \frac{\beta(m) - \beta(m')}{n}
\, .
\end{split}
\end{align}
Comme en section~\ref{VC.sec.estim-risque.biais}, 
on voit que le biais de la validation crois\'ee vient du 
fait que l'\'echantillon d'entra\^inement est de taille $n_e$ 
au lieu de~$n$ ; 
en particulier, ce biais d\'epend uniquement de $n_e$. 

Quel impact sur la s\'election d'estimateurs ? 
En comparant \eqref{VC.eq.E-incr-Rcv-hyp-OLS} 
et \eqref{VC.eq.E-incr-R-hyp-OLS}, on observe que la diff\'erence 
des erreurs d'approximation $\alpha(m) - \alpha(m')$ 
est correctement estim\'ee, 
tandis que la diff\'erence des erreurs d'estimation 
$(\beta(m) - \beta(m'))/n$ 
est estim\'ee \`a un facteur multiplicatif $n/n_e>1$ pr\`es. 
Puisque $\beta(m)$ mesure en g\'en\'eral la \og{}complexit\'e\fg{} de $\fh_m$ 
(par exemple, via la dimension du mod\`ele sous-jacent), 
ceci signifie que la validation crois\'ee tend \`a s\'electionner une r\`egle $\fh_{\mh}$ 
de complexit\'e plus petite que celle de la meilleure r\`egle (l'oracle). 
C'est naturel : la validation crois\'ee \og{}fait comme si\fg{} chaque r\`egle d'apprentissage 
$\fh_m$ \'etait entra\^in\'ee avec $n_e$ observations, 
ce qui la conduit \`a faire un choix plus conservateur que celui qu'il faut faire 
quand on dispose de $n > n_e$ observations. 

Sur le plan quantitatif, l'impact de $n_e$ est 
en g\'en\'eral similaire \`a ce que l'on a d\'ecrit en section~\ref{VC.sec.estim-risque.biais} 
pour l'estimation du risque. 
\citet{shao_1997} \'enonce des r\'esultats pr\'ecis en r\'egression lin\'eaire par moindres carr\'es 
\SAindex{moindres carres@moindres carr\'es!regression lineaire@r\'egression lin\'eaire}% 
et \citet[section~6]{arlo_2010} donnent les r\'ef\'erences 
de nombreux autres r\'esultats, que l'on peut r\'esumer ainsi. 
Pour la validation crois\'ee, trois cas sont \`a distinguer : 
\begin{itemize}
\item 
lorsque $n_e \sim n$, on obtient une performance 
optimale au premier ordre. 
\item 
lorsque $n_e \sim \kappa n$ avec $\kappa \in \,\, ]0,1[$, 
on obtient une performance sous-optimale : 
le pr\'edicteur final perd un facteur constant 
(fonction de $\kappa$ notamment) par rapport au risque 
du meilleur pr\'edicteur. 
\item 
lorsque $n_e \ll n$, la performance est \'egalement sous-optimale, 
mais la perte est d'un facteur qui tend vers l'infini lorsque 
$n$ tend vers l'infini. 
\end{itemize}
\SAindex{validation croisee@validation crois\'ee!corrig\'ee}%  
Pour la validation crois\'ee corrig\'ee, le biais \'etant nul, 
on obtient une performance optimale au premier ordre 
quel que soit $n_e$. 

\medbreak

La principale limite de ces r\'esultats au premier ordre est qu'ils ne font aucune 
diff\'erence entre des proc\'edures utilisant des \'echantillons 
d'entra\^inement de m\^eme taille et dont les performances empiriques 
sont bien diff\'erentes 
(par exemple, la validation simple et le leave-$p$-out). 
Il semble donc bien qu'en pratique, 
les termes de deuxi\`eme ordre comptent !

\subsection{Analyse au deuxi\`eme ordre : variance}
\label{VC.sec.sel-estim.var}

L'\'etude de la variance (des incr\'ements du crit\`ere, 
d'apr\`es la section~\ref{VC.sec.sel-estim.diff-avec-estim-risq}) 
permet d'expliquer bon nombre de ph\'enom\`enes observ\'es empiriquement 
(en supposant l'heuristique de la section~\ref{VC.sec.sel-estim.diff-avec-estim-risq} 
correcte, ce que l'on fait tout au long de cette section). 

Tout d'abord, les r\'esultats g\'en\'eraux de la section~\ref{VC.sec.estim-risque.var},  
sur la variance d'un estimateur par validation crois\'ee 
du risque d'une r\`egle $\fh_m$, 
se g\'en\'eralisent \`a la variance des incr\'ements 
\[
 \cRh^{\mathrm{vc}} \bigl( \fh_m; D_n; (E_j)_{1 \leq j \leq V} \bigr) 
 - \cRh^{\mathrm{vc}} \bigl( \fh_{m'}; D_n; (E_j)_{1 \leq j \leq V} \bigr)
\]
quelles que soient $\fh_m$ et $\fh_{m'}$ deux r\`egles d'apprentissage. 
Ceci confirme donc l'intuition selon laquelle, 
\`a taille d'\'echantillon d'entra\^inement $n_e$ fix\'ee, 
le leave-$(n-n_e)$-out est la meilleure proc\'edure de s\'election d'estimateurs, 
\SAindex{validation croisee@validation crois\'ee!leave-p-out@leave-$p$-out}% 
\SAindex{validation simple}% 
la validation simple la moins bonne, 
toutes les autres proc\'edures de validation crois\'ee ayant des performances entre les deux. 
Et pour la validation crois\'ee Monte-Carlo, 
\SAindex{validation croisee@validation crois\'ee!Monte-Carlo}% 
la performance en s\'election d'estimateurs s'am\'eliore quand $V$ augmente. 
De plus, la variance des incr\'ements \'etant une fonction affine croissante de $1/V$, 
si les valeurs de variance sont comparables pour validation simple et pour 
le leave-$(n-n_e)$-out, 
il n'est pas n\'ecessaire de prendre $V$ tr\`es grand pour avoir une performance 
tr\`es proche de l'optimum (\`a $n_e$ fix\'ee). 

\medbreak

\SAindex{moindres carres@moindres carr\'es!estimation de densit\'e|(}% 
Pour pouvoir \'enoncer des r\'esultats quantitatifs pr\'ecis 
sur la variance des incr\'ements, comme en section~\ref{VC.sec.estim-risque.var}, 
consid\'erons le cadre de l'estimation de densit\'e 
avec le co\^ut des moindres carr\'es et des estimateurs par 
histogrammes r\'eguliers de pas $h_m>0$ \citep{arlo_2016}. 
Alors, les r\'esultats \'enonc\'es en section~\ref{VC.sec.estim-risque.var} 
se g\'en\'eralisent, seuls les termes $\cW_i(h_m,P)$ devant \^etre chang\'es en 
$\cW_i(h_m, h_{m'},P)$. 
Toutes les comparaisons qualitatives entre m\'ethodes \'enonc\'ees 
en section~\ref{VC.sec.estim-risque.var}  
sont donc encore valables pour la s\'election d'estimateurs. 

La nouveaut\'e vient des aspects quantitatifs. 
Si $h_m$ et $h_{m'}$ sont de \og{}bonnes\fg{} valeurs 
(suffisamment diff\'erentes pour qu'il soit utile de savoir choisir entre les deux), 
alors $\cW_2( h_m, h_{m'}, P) \ll \cW_2 (h_m, P)$ : 
la variance des incr\'ements est donc beaucoup plus faible que la variance 
des crit\`eres, un ph\'enom\`ene d\'ej\`a remarqu\'e par \citet[sections~5.1 et 7.3]{brei_1992} 
dans un autre cadre. 

Par ailleurs, $n^{-1} \cW_2( h_m, h_{m'}, P)$ est alors du m\^eme ordre de grandeur que 
$n^{-2} \cW_1( h_m, h_{m'}, P)$. 
L'ordre de grandeur de la variance des incr\'ements est donc aussi impact\'e par les 
valeurs de $C_1^{\mathrm{vf}}(V,n)$ ou $C_1^{\mathrm{MC}} (V,n,n_e)$. 
Ceci change la donne pour la validation crois\'ee $V$-fold : 
\SAindex{validation croisee@validation crois\'ee!V-fold@$V$-fold}% 
\`a $V$ fix\'e, la variance des incr\'ements diminue au premier ordre ! 
Cependant, cette diminution reste de l'ordre d'une constante num\'erique 
(au plus $C_1^{\mathrm{vf}}(2,n)$ qui tend vers $10$ quand 
$n$ tend vers l'infini). 
De plus, vu la formule donnant $C_1^{\mathrm{vf}}(V,n)$, 
il suffit de prendre $V=5$ ou $10$ 
pour que la variance soit tr\`es proche de son minimum. 
Nous renvoyons aux r\'esultats de la fin de la section~\ref{VC.sec.estim-risque.var} 
pour discuter le cas Monte-Carlo 
et la comparaison avec le $V$-fold 
(\`a r\'einterpr\'eter en tenant compte de la diff\'erence d'ordre de grandeur des 
$\cW_i$ quand on consid\`ere les incr\'ements). 
Les exp\'eriences num\'eriques de \citet[section~6]{arlo_2016} permettent \'egalement 
de bien visualiser les ordres de grandeur des termes $\cW_i$ dans des cas r\'ealistes.

\subsection{Analyse au deuxi\`eme ordre : surp\'enalisation}
\label{VC.sec.sel-estim.surpen}

La deuxi\`eme partie de la section~\ref{VC.sec.sel-estim.diff-avec-estim-risq} 
met en \'evidence un deuxi\`eme facteur important \`a prendre en compte au deuxi\`eme ordre, 
en plus de la variance des incr\'ements : 
le fait que \og{}surp\'enaliser\fg{} 
un peu am\'eliore souvent les performances. 

\SAindex{validation croisee@validation crois\'ee!V-fold@$V$-fold|(}% 
\SAindex{validation croisee@validation crois\'ee!corrig\'ee|(}% 
Prenons le cas de l'estimation de densit\'e par moindres carr\'es. 
Alors, en comparant la validation crois\'ee $V$-fold et la validation crois\'ee 
$V$-fold corrig\'ee, 
\citet{arlo_2016} montrent que la validation crois\'ee $V$-fold surp\'enalise 
d'un facteur : 
\[ 1 + \frac{1}{2 (V-1)} \, . \]
Plus g\'en\'eralement, si l'on suppose 
\eqref{VC.eq.hyp-risque-n-OLS} et \eqref{VC.eq.hyp-penid-n-OLS} 
v\'erifi\'ees avec $\gamma(m) = 2 \beta(m)$ 
(comme c'est le cas en estimation de densit\'e par moindres carr\'es), 
alors, la validation crois\'ee avec un \'echantillon d'entra\^inement de taille $n_e$ 
surp\'enalise d'un facteur :  
\[ 
\frac{1}{2} \parenj{ 1 + \frac{n}{n_e} } > 1 
\, . 
\]

Lorsque surp\'enaliser est b\'en\'efique, \`a variance constante, 
il est donc pr\'ef\'erable de prendre $n_e \approx \kappa n$ 
avec $\kappa \in \,\, ]0,1[$ fonction du facteur de surp\'enalisation optimal. 
Par exemple, si l'optimum est de surp\'enaliser d'un facteur $1,12$ 
(comme dans le cadre de la figure~\ref{VC.fig.surpen}), 
on obtient que le mieux est d'avoir $n_e = n/1,24$,  
ce qui correspond au $5$-fold. 
Si l'optimum est de surp\'enaliser d'un facteur $3/2$, 
le mieux est d'avoir $n_e = n/2$,  
ce qui correspond au $2$-fold. 
Et lorsque le facteur de surp\'enalisation optimal est plus grand, 
il faut prendre $n_e$ encore plus petit,  
ce qui est impossible avec une m\'ethode \og{}$V$-fold\fg{} ! 

Attention toutefois \`a ne pas oublier que ceci 
n'est correct que si l'on compare des crit\`eres de \emph{m\^eme variance}. 
Si l'on augmente la variance pour surp\'enaliser 
(comme on le fait en diminuant $V$ pour le $V$-fold), 
on gagne d'un c\^ot\'e mais on perd de l'autre. 
\textit{In fine}, de nombreux cas de figure peuvent se produire quand on trace 
le risque du pr\'edicteur final obtenu par validation crois\'ee 
$V$-fold en fonction de $V$ : 
il peut d\'ecro\^itre avec $V$, augmenter avec $V$, 
ou \^etre minimal pour une valeur de $V$ \og{}interm\'ediaire\fg{}, 
comme le montrent des simulations num\'eriques \citep{arlo_2016}. 

Si l'on veut y voir clair, il faut raisonner en changeant un seul facteur \`a la fois 
parmi le nombre $V$ de d\'ecoupages (qui influe sur la variance) 
et la taille $n_e$ de l'\'echantillon d'entra\^inement (qui d\'efinit la surp\'enalisation, 
en influant assez peu sur la variance). 
\SAindex{validation croisee@validation crois\'ee!Monte-Carlo|(}% 
Ceci peut se faire naturellement avec la validation crois\'ee Monte-Carlo. 
Peut-on le faire avec une strat\'egie \og{}$V$-fold\fg{}, 
qui est un peu meilleure que la strat\'egie Monte-Carlo 
en termes de variance ?  
Oui, avec la p\'enalisation $V$-fold 
\SAindex{selection modeles@s\'election de mod\`eles!p\'enalisation|(}% 
\citep{arlo_2008,arlo_2016}. 
Sans la d\'efinir pr\'ecis\'ement ici, d\'ecrivons-en le principe : 
il s'agit d'une m\'ethode de p\'enalisation 
\[ 
\mh_C \bigl( D_n; (B_j)_{1 \leq j \leq V} \bigr) \in 
\argmin_{m \in \cM_n} \Bigl\{ \cRh_n \bigl( \fh_m (D_n) \bigr) 
+ C \penal_{\mathrm{vf}} \bigl( m; D_n; (B_j)_{1 \leq j \leq V} \bigr) \Bigr\}
\]
d\'efinie pour une partition $(B_j)_{1 \leq j \leq V}$ 
en $V$ blocs de m\^eme taille, 
qui co\"incide avec la validation crois\'ee $V$-fold corrig\'ee si $C=1$. 
En estimation de densit\'e par moindres carr\'es, elle co\"incide avec la validation 
crois\'ee $V$-fold si 
\[ 
C = 1 + \frac{1}{2(V-1)}
\, . 
\]
Alors, on peut choisir pour $C$ un \og{}bon\fg{} facteur de surp\'enalisation 
(en supposant que l'on conna\^it une telle valeur), 
et ensuite on choisit $V$ aussi grand que possible afin de minimiser la variance. 

\begin{remarque}[P\'enalisation $V$-fold ou validation crois\'ee Monte-Carlo ?]
\label{VC.penVF-vs-VCMC}
%
%%\hfill \\ 
On a propos\'e deux strat\'egies pour d\'ecoupler $V$ et $n_e$ : 
la validation crois\'ee Monte-Carlo et la p\'enalisation $V$-fold. 
Laquelle choisir en pratique ? 
Au vu des calculs de variance de \citet{arlo_2016}, 
la p\'enalisation $V$-fold semble pr\'ef\'erable : 
pour une m\^eme valeur de $V$ et de la constante de surp\'enalisation, 
on obtient le plus souvent une variance plus faible. 
Cependant, ceci est valable dans un cadre bien pr\'ecis, 
et il n'est pas certain que cela soit toujours vrai hors de ce cadre. 
Pire encore, le fait que la p\'enalisation $V$-fold 
surp\'enalise bien d'un facteur $C$ 
(en particulier, le fait qu'elle estime sans biais le risque lorsque $C=1$) 
n'est vraisemblablement pas valable lorsque l'hypoth\`ese \eqref{VC.eq.hyp-penid-n-OLS} 
n'est pas v\'erifi\'ee. 
Au final, il semble raisonnable d'utiliser la p\'enalisation $V$-fold 
pour la s\'election parmi des pr\'edicteurs \og{}r\'eguliers\fg{} 
--- c'est-\`a-dire, susceptibles de v\'erifier \eqref{VC.eq.hyp-penid-n-OLS}, 
au moins approximativement. 
Dans les autres cas, mieux vaut utiliser la validation crois\'ee Monte-Carlo 
(ou $V$-fold r\'ep\'et\'ee, si c'est possible). 
\SAindex{validation croisee@validation crois\'ee!V-fold repetee@$V$-fold r\'ep\'et\'ee}% 
Et quoi qu'il en soit, il ne faut pas oublier de bien choisir $V$ et $C$ ou $n_e$, 
selon le cas. 
\end{remarque}
\SAindex{selection modeles@s\'election de mod\`eles!p\'enalisation|)}% 
\SAindex{validation croisee@validation crois\'ee!Monte-Carlo|)}% 
\SAindex{validation croisee@validation crois\'ee!V-fold@$V$-fold|)}% 
\SAindex{validation croisee@validation crois\'ee!corrig\'ee|)}% 
\SAindex{moindres carres@moindres carr\'es!estimation de densit\'e|)}% 

\section{Conclusion}
\label{VC.sec.concl}
\SAindex{risque!estimation@estimation d'un risque|(}% 

\subsection{Choix d'une proc\'edure de validation crois\'ee}
\label{VC.sec.concl.choix}

Comment choisir une proc\'edure de validation crois\'ee pour un probl\`eme donn\'e ? 

Tout d'abord, lorsque le temps de calcul disponible est limit\'e, 
il faut prendre en compte la complexit\'e algorithmique 
des proc\'edures de validation crois\'ee. 
Dans la plupart des cas, 
celle-ci est proportionnelle au nombre $V$ de d\'ecoupages 
(il faut mettre en \oe uvre $V$ fois chaque r\`egle $\fh_m$ consid\'er\'ee). 
Parfois, les crit\`eres de validation crois\'ee peuvent \^etre 
calcul\'es plus efficacement, comme d\'etaill\'e par \citet[section~9]{arlo_2010}. 

\medbreak

\SAindex{moindres carres@moindres carr\'es!estimation de densit\'e}% 
Dans les cas \og{}r\'eguliers\fg{}, comme l'estimation de densit\'e par moindres carr\'es, 
les r\'esultats d\'ecrits en section~\ref{VC.sec.sel-estim} 
sugg\`erent la strat\'egie suivante. 
Dans un premier temps, choisir un facteur de surp\'enalisation 
($1$~si l'on veut estimer le risque ; 
souvent un peu plus pour un probl\`eme de s\'election d'estimateurs). 
\SAindex{validation croisee@validation crois\'ee!Monte-Carlo}% 
\SAindex{validation croisee@validation crois\'ee!V-fold repetee@$V$-fold r\'ep\'et\'ee}% 
Si l'on utilise la validation crois\'ee Monte-Carlo 
(ou le $V$-fold r\'ep\'et\'e), ceci se fait en choisissant la taille $n_e$ 
de l'\'echantillon d'entra\^inement. 
\SAindex{validation croisee@validation crois\'ee!V-fold@$V$-fold}% 
Si l'on utilise la p\'enalisation $V$-fold (d\'efinie en section~\ref{VC.sec.sel-estim.surpen}, 
voir notamment la remarque~\ref{VC.penVF-vs-VCMC}), 
ceci se fait directement avec le param\`etre~$C$. 
Puis on choisit le nombre $V$ de d\'ecoupages, 
en le prenant aussi grand que possible pour optimiser 
la performance statistique, dans la limite des capacit\'es de calcul. 
Les calculs de variance report\'es en section~\ref{VC.sec.sel-estim.var} 
indiquent qu'il n'est pas n\'ecessaire de prendre $V$ tr\`es grand 
pour avoir une performance quasi optimale. 
Avec une pr\'ecision importante : ceci n'est totalement vrai que si $n_e$ 
est de l'ordre de $\kappa n$ avec $\kappa \in \,\, ]0,1[$. 
Sinon (par exemple lorsque $n_e = n-1$), 
la variance de la validation simple peut \^etre \emph{beaucoup} plus grande que celle 
du leave-$(n-n_e)$-out, 
et alors il est n\'ecessaire de prendre $V$ tr\`es grand pour obtenir une variance 
du bon ordre de grandeur. 

\SAindex{validation croisee@validation crois\'ee!V-fold@$V$-fold}% 
Si l'on s'impose d'utiliser la validation crois\'ee $V$-fold, 
choisir $V$ peut s'av\'erer plus d\'elicat, 
car ce param\`etre d\'etermine simultan\'ement le facteur de surp\'enalisation 
(via la taille $n(V-1)/V$ de l'\'echantillon d'entra\^inement) 
et le nombre de d\'ecoupages. 
Pour la s\'election d'estimateurs, en admettant que dans la plupart des cas 
il est bon de surp\'enaliser \og{}un peu\fg{} (comme on le constate empiriquement), 
alors prendre $V$ entre $5$ et $10$ est un tr\`es bon choix (voire optimal). 
En effet, on surp\'enalise alors d'un facteur entre $1,05$ et $1,12$ 
%%% $1+1/18$ \`a $1+1/8$
et l'on a une variance proche de sa valeur minimale possible. 
Cette fourchette de valeurs de $V$ correspond d'ailleurs 
aux conseils classiques dans la litt\'erature statistique\footnote{% 
C'est par exemple le conseil donn\'e par \citet[section~7.10.1]{hast-2009}. 
Les intuitions g\'en\'eralement propos\'ees pour \'etayer ce conseil sont cependant 
diff\'erentes des arguments donn\'es dans ce texte, 
et parfois en contradiction avec les r\'esultats th\'eoriques 
rapport\'es dans ce texte. 
Par exemple, il est faux de dire que le $5$-fold a \emph{toujours} une variance plus faible 
que le leave-one-out.}. 
Si l'objectif est d'estimer le risque d'un pr\'edicteur, 
la situation est un peu diff\'erente car il faut choisir $V$ 
le plus grand possible pour minimiser le biais (et la variance) : 
les formules donnant biais et variance en fonction de $V$ 
permettent alors d'\'evaluer o\`u se situe le meilleur compromis entre 
pr\'ecision statistique et complexit\'e algorithmique. 

\medbreak

\SAindex{stabilit\'e}% 
Dans le cas g\'en\'eral, en particulier quand on s'int\'eresse \`a un ou plusieurs 
pr\'edicteurs \og{}instables\fg{}, 
le comportement des proc\'edures de validation crois\'ee peut \^etre diff\'erent, 
ce dont t\'emoignent plusieurs r\'esultats empiriques rapport\'es par 
\citet[sections 5.2 et~8]{arlo_2010}, en particulier 
ceux de \citet[section~7]{brei_1996}. 
Il semble alors pr\'ef\'erable de r\'ealiser des exp\'eriences num\'eriques 
(par exemple, sur des donn\'ees synth\'etiques) 
pour d\'eterminer le comportement de la validation crois\'ee 
en fonction de $n_e$ et~$V$. 
Au vu des sections \ref{VC.sec.estim-risque} et~\ref{VC.sec.sel-estim}, 
trois quantit\'es cl\'e sont \`a \'etudier. 
D'une part, le risque moyen $\E[\cR_P(\fh_m(D_n))]$ 
et sa d\'ependance en la taille $n$ de l'\'echantillon 
permettent de comprendre le biais de la validation crois\'ee 
(et son comportement au premier ordre pour la s\'election d'estimateurs). 
D'autre part, pour prendre en compte la variance, 
la quantit\'e \`a calculer d\'epend de l'objectif. 
Si l'on s'int\'eresse \`a l'estimation du risque, 
alors il faut calculer la variance de l'estimateur par validation crois\'ee 
du risque d'une r\`egle~$\fh_m$. 
Si l'on s'int\'eresse \`a la s\'election d'estimateurs, 
alors la quantit\'e \`a calculer est 
\begin{equation*}
\begin{split}
\var \Bigl( 
 \cRh^{\mathrm{vc}} \bigl( \fh_m; D_n; (E_j)_{1 \leq j \leq V} \bigr) 
 - \cRh^{\mathrm{vc}} \bigl( \fh_{m^{\circ}}; D_n; (E_j)_{1 \leq j \leq V} \bigr)
\Bigr)
\\ \text{o\`u} \qquad 
m^{\circ} \in \argmin_{m \in \cM_n} \biggl\{ \E \Bigl[ \cR_P \bigl( \fh_m(D_n) \bigr) \Bigr] \biggr\}
\end{split}
\end{equation*}
est le choix oracle et $m$ est \og{}proche\fg{} de l'oracle. 
Notons que l'on peut s'\'epargner certains calculs de variance en utilisant 
la proposition~\ref{VC.pro.var-MCCV} et 
la formule \eqref{VC.eq.var-hold-out} en section~\ref{VC.sec.estim-risque.var}, 
ainsi que leurs g\'en\'eralisations aux incr\'ements d'estimateurs par validation crois\'ee.

\begin{remarque}[Objectif d'identification]
\label{VC.rk.identification}
\SAindex{selection modeles@s\'election de mod\`eles!consistance en s\'election}% 
On aboutit \`a des conclusions diff\'erentes quand on a pour objectif 
d'\emph{identifier} 
le \og{}vrai\fg{} mod\`ele\footnote{% 
Le vrai mod\`ele est d\'efini comme le plus petit mod\`ele 
contenant $\fst$, en supposant qu'il existe.} % fin footnote
ou la r\`egle d'apprentissage dont l'exc\`es de risque d\'ecro\^it le plus 
vite parmi $(\fh_m)_{m \in \cM_n}$ 
\citep[section~7]{arlo_2010}. 
\SAindex{moindres carres@moindres carr\'es!regression lineaire@r\'egression lin\'eaire}% 
Par exemple, en r\'egression lin\'eaire par moindres carr\'es, 
la validation crois\'ee choisit le vrai mod\`ele avec probabilit\'e~$1$ 
asymptotiquement si et seulement si $n_e \ll n$ 
\citep[avec des hypoth\`eses sur $V$,][]{shao_1997}. 
Ce ph\'enom\`ene se g\'en\'eralise \`a d'autres cadres 
et a \'et\'e nomm\'e \og{}paradoxe de la validation crois\'ee\fg{} par 
\citet{yang_2006,yang_2007} : 
pour l'identification, plus on a d'observations \`a disposition, 
plus il faut en utiliser une fraction petite pour l'entra\^inement. 
On peut rapprocher ces r\'esultats du fait qu'il est n\'ecessaire 
de surp\'enaliser fortement pour une identification optimale, 
d'o\`u le fait que BIC fonctionne, alors que AIC et $C_p$ sont sous-optimales. 
\SAindex{BIC}% 
\SAindex{AIC|(}% 
\SAindex{cp@$C_p$|(}% 
Signalons enfin que pour l'identification, Yang conseille d'utiliser la variante 
\SAindex{validation croisee@validation crois\'ee!vote@(par) vote}% 
\og{}par vote majoritaire\fg{} 
de la validation crois\'ee, d\'efinie en section~\ref{VC.sec.def.var}. 
\end{remarque}

\subsection{Validation crois\'ee ou proc\'edure sp\'ecifique ?}

Lorsqu'une autre proc\'edure de s\'election d'estimateurs 
est disponible --- par exemple, $C_p$ ou AIC ---, 
faut-il la pr\'ef\'erer \`a la validation crois\'ee ? 

Si l'on est dans le cadre sp\'ecifique pour lequel cette proc\'edure 
a \'et\'e construite (pour $C_p$, la r\'egression lin\'eaire homosc\'edastique avec 
le co\^ut quadratique et des estimateurs des moindres carr\'es), 
\SAindex{moindres carres@moindres carr\'es!regression lineaire@r\'egression lin\'eaire}% 
\SAindex{regression@r\'egression!homoscedastique@homosc\'edastique}% 
alors c'est elle qu'il faut utiliser. 
Des exp\'eriences num\'eriques montrent en effet 
que la validation crois\'ee fonctionne alors souvent un peu 
moins bien que les proc\'edures sp\'ecifiques. 
C'est le prix de l'\og{}universalit\'e\fg{} de la validation crois\'ee. 

En revanche, si l'on risque de sortir un peu du cadre o\`u la proc\'edure 
\og{}sp\'ecifique\fg{} est connue pour \^etre optimale 
(par exemple, pour $C_p$, si l'on soup\c{c}onne les donn\'ees d'\^etre h\'et\'erosc\'edastiques), 
alors il est plus s\^ur d'utiliser une proc\'edure \og{}universelle\fg{} 
comme la validation crois\'ee 
(si les capacit\'es de calcul disponibles le permettent). 
\SAindex{AIC|)}% 
\SAindex{cp@$C_p$|)}% 

\subsection{Limites de l'universalit\'e}
Il est bon de garder en t\^ete que la validation crois\'ee ne peut pas fonctionner 
parfaitement d'une mani\`ere totalement universelle, 
ne serait-ce qu'\`a cause des r\'esultats \og{}on n'a rien sans rien\fg{} 
\citep[section~6]{Arl_2016_JESchap2}. 
En particulier, la validation crois\'ee suppose implicitement 
qu'il est possible d'\'evaluer le risque d'un pr\'edicteur 
$f \in \cF$ \`a partir de $n-n_e$ observations. 
Ce n'est clairement pas possible dans l'exemple construit pour d\'emontrer le 
premier r\'esultat \og{}on n'a rien sans rien\fg{} de 
\citet[th\'eor\`eme~3 en section~6]{Arl_2016_JESchap2}.

\medbreak

Des probl\`emes se posent \'egalement lorsque les hypoth\`eses explicitement faites 
par la validation crois\'ee sont viol\'ees  
(donn\'ees ind\'ependantes et de m\^eme loi ; 
et pour la s\'election d'estimateurs, la collection $(\fh_m)_{m \in \cM_n}$ est suppos\'ee 
n'\^etre \og{}pas trop grande\fg{}) \citep[section~3.9]{Arl_2016_JESchap2}. 
\begin{itemize}
\item 
Lorsque les donn\'ees sont \emph{d\'ependantes}, 
les \'echantillons d'entra\^inement et de validation ne sont plus n\'ecessairement 
ind\'ependants, 
ce qui peut induire un biais assez fort pour l'estimation du risque 
d'une r\`egle d'apprentissage. 
Dans le cas de d\'ependance \`a courte port\'ee, ce biais peut \^etre \'evit\'e 
en utilisant des \'echantillons d'entra\^inement $D_n^{E}$ et 
de validation $D_n^{V}$ tels que $E$ et $V$ sont suffisamment \'eloign\'es. 
\item 
Pour une s\'erie temporelle \emph{non-stationnaire}, 
si l'on s'int\'eresse \`a la pr\'evision du \og{}futur\fg{} \`a partir du \og{}pass\'e\fg{}, 
on ne peut pas utiliser la validation crois\'ee telle quelle. 
En choisissant un \'echantillon d'entra\^inement dans le pass\'e 
par rapport \`a l'\'echantillon de validation 
(et en les \'eloignant si besoin), 
on peut appliquer la validation simple, 
mais sans garantie en raison de la non-stationnarit\'e. 
Si l'on a observ\'e une s\'erie assez longue, on peut \'egalement utiliser une fen\^etre glissante 
pour multiplier les couples entra\^inement/validation. 
\item 
\SAindex{donnees aberrantes@donn\'ees aberrantes}% 
En pr\'esence de \emph{donn\'ees aberrantes}, 
il est indispensable d'utiliser des pr\'edicteurs robustes et/ou une fonction de co\^ut robuste. 
\item 
Pour la s\'election d'estimateurs parmi une \emph{collection exponentielle} 
\citep[section~3.9]{Arl_2016_JESchap2}, 
\SAindex{selection estimateurs@s\'election d'estimateurs!principe d'estimation sans biais du risque}% 
le principe d'estimation sans biais du risque ne fonctionne plus 
et bon nombre de conclusions de ce texte sont erron\'ees. 
Une id\'ee est alors de \og{}surp\'enaliser\fg{} fortement, 
en prenant un \'echantillon d'entra\^inement de petite taille 
(comme indiqu\'e en section~\ref{VC.sec.sel-estim}), 
ce qui permet d'avoir une proc\'edure de la deuxi\`eme famille d\'ecrite 
par \citet[section~3.9]{Arl_2016_JESchap2}. 
Une deuxi\`eme strat\'egie --- appel\'ee \og{}validation crois\'ee en deux \'etapes\fg{} ou  
\og{}double cross\fg{} \citep{stone-1974} --- est la suivante. 
D'abord, on forme un nombre polynomial de groupes de r\`egles d'apprentissage. 
Puis, \`a l'int\'erieur de chaque groupe, on utilise la validation crois\'ee pour s\'electionner une r\`egle ; 
on dispose donc d'une \og{}m\'eta-r\`egle\fg{} associ\'ee \`a chaque groupe. 
Enfin, on s\'electionne par validation crois\'ee l'une de ces m\'eta-r\`egles 
(en suivant les conseils formul\'es \`a la fin de la section~\ref{VC.sec.def.gal}). 
Cette deuxi\`eme strat\'egie s'applique par exemple pour le probl\`eme de d\'etection de ruptures. 
\SAindex{detection de ruptures@d\'etection de ruptures}% 
\end{itemize}
\citet[section~8]{arlo_2010} d\'etaillent tous ces points 
et donnent des r\'ef\'erences bibliographiques. 
\SAindex{risque!estimation@estimation d'un risque|)}% 
\SAindex{selection estimateurs@s\'election d'estimateurs|)}% 

\section{Annexe : exercices}

%%% Regles intelligentes

\begin{exercice}
\label{VC.exo.intelligente-majorite}
\SAindex{regle apprentissage@r\`egle d'apprentissage!intelligente|(}% 
\SAindex{partition!regle classification@r\`egle de classification|(}% 
On se place en classification $0$--$1$. 
Soit $\fh$ la r\`egle par partition associ\'ee \`a la partition 
triviale $\cA = \{ \cX \}$; 
autrement dit, $\fh( (x_i,y_i)_{1 \leq i \leq n} ; x)$ 
r\'ealise un vote majoritaire parmi les $y_i$, 
sans tenir compte des $x_i$ ni de $x$. 
\\
D\'emontrer que $\fh$ n'est pas intelligente. 
En d\'eduire qu'aucune r\`egle par partition (sur une partition $\cA$ fixe quand $n$ varie) 
n'est intelligente. 
\\
Modifier l\'eg\`erement $\fh$ pour la rendre intelligente. 
\end{exercice}

\begin{exercice}
\label{VC.exo.intelligente-part-random}
On se place en classification $0$--$1$. 
Soit $\widetilde{f}$ la r\`egle par partition associ\'ee \`a la partition 
triviale $\cA = \{ \cX \}$, 
avec une d\'ecision \og{}randomis\'ee\fg{} dans les cas d'\'egalit\'e : 
\SAindex{regle apprentissage@r\`egle d'apprentissage!randomis\'ee}% 
s'il y a exactement $n/2$ des $y_i$ qui sont \'egaux \`a~$1$, 
$\widetilde{f}( (x_i,y_i)_{1 \leq i \leq n} ; x)$ vaut $1$ avec probabilit\'e~$1/2$ 
et vaut $0$ avec probabilit\'e~$1/2$. 
La notion de r\`egle intelligente s'\'etend naturellement \`a $\widetilde{f}$ 
en prenant aussi l'esp\'erance sur la randomisation interne $\widetilde{f}$  
dans la d\'efinition de son risque moyen. 
\\
D\'emontrer que $\widetilde{f}$ est intelligente. 
En d\'eduire que tout r\`egle par partition (sur une partition $\cA$ fixe quand $n$ varie), 
randomis\'ee de la m\^eme mani\`ere en cas d'\'egalit\'e, est intelligente.  
\end{exercice}
\SAindex{regle apprentissage@r\`egle d'apprentissage!intelligente|)}%  
\SAindex{partition!regle classification@r\`egle de classification|)}% 

%%% Variance 

\begin{exercice}
\label{VC.exo.var-VF-repete}
\SAindex{validation croisee@validation crois\'ee!V-fold repetee@$V$-fold r\'ep\'et\'ee}% 
D\'emontrer la remarque~\ref{VC.rk.var.pro-ho-lpo-et-MCCV.generalisation} 
en section~\ref{VC.sec.estim-risque.var}. 
En particulier, si 
$(B_j^{\ell})_{1 \leq j \leq V}$, 
$\ell \in \{1, \ldots, L\}$, 
est une suite de partitions de $\{1, \ldots, n\}$,  
ind\'ependantes et de m\^eme loi uniforme sur l'ensemble 
des partitions de $\{1, \ldots, n\}$ en $V$ blocs de m\^eme taille, 
ind\'ependante de $D_n$, 
d\'emontrer la formule suivante pour la variance de l'estimateur 
par validation crois\'ee $V$-fold r\'ep\'et\'ee 
du risque d'une r\`egle d'apprentissage $\fh_m$ : 
\begin{align*}
&\qquad \var \biggl( 
\cRh^{\mathrm{vc}} \Bigl( \fh_m ; D_n ; \bigl( ( B_j^{\ell})^c \bigr)_{1 \leq j \leq V, 1 \leq \ell \leq L} \Bigr)  
\biggr) 
\\
&= 
\var\parenj{ \cRh^{\mathrm{lpo}}\parenj{ \fh_m; D_n; \frac{n (V-1)}{V} } } 
\\
&\qquad + \frac{1}{L} \biggl[ 
\underbrace{ \var \Bigl( \cRh^{\mathrm{vf}} \bigl( \fh_m ; D_n ; (B_j^{1})_{1 \leq j \leq V} \bigr)  \Bigr) 
- \var\parenj{ \cRh^{\mathrm{lpo}}\parenj{ \fh_m; D_n; \frac{n (V-1)}{V} } }  
}_{\geq 0}
\biggr]
\, .
\end{align*}
\end{exercice}

\begin{exercice}
\label{VC.exo.eq.var-hold-out}
D\'emontrer la formule \eqref{VC.eq.var-hold-out} 
donnant la variance du crit\`ere par validation simple. 
\SAindex{validation simple}% 
\end{exercice}

\section*{Remerciements}
Cet texte fait suite \`a un cours donn\'e dans le cadre des Journ\'ees 
d'\'Etudes en Statistique 2016. 
Il s'agit d'une version pr\'eliminaire du chapitre~3 
du livre \emph{Apprentissage statistique et donn\'ees massives}, 
\'edit\'e par Fr\'ed\'eric Bertrand, Myriam Maumy-Bertrand, 
Gilbert Saporta et Christine Thomas-Agnan, 
\`a para\^itre aux \'editions Technip. 

Je remercie vivement tous les coll\`egues avec qui j'ai travaill\'e sur ce sujet, 
en particulier mes coauteurs Alain Celisse, Matthieu Lerasle et Nelo Magalh\~aes. 
Je remercie \'egalement 
Matthieu Lerasle pour avoir relu ce texte, 
ainsi que 
les \'etudiants qui ont suivi mon cours de master 2 
\og{}apprentissage statistique et r\'e\'echantillonnage\fg{} 
\`a l'Universit\'e Paris-Sud 
et les participants des JES 2016 pour leurs questions et commentaires.

\SAindex{validation croisee@validation crois\'ee|)}%

\bibliographystyle{plainnat-fr}
\bibliography{hal_biblio}

\printindex

\end{document}